
\documentclass[hidelinks,12pt]{article}

\usepackage[margin=1in]{geometry}  

\usepackage{BOONDOX-cal}
\usepackage[active]{srcltx}
\usepackage{amsmath}               
\usepackage{amsfonts}              
\usepackage{amsthm}                
\usepackage{amssymb}
\usepackage{titlesec}
\usepackage{verbatim}
\usepackage{hyperref}
\usepackage{theoremref}
\usepackage{xcolor}

\usepackage{bm}


\newtheoremstyle{break}
  {\topsep}{\topsep}%
  {\itshape}{}%
  {\bfseries}{}%
  {\newline}{}%
\theoremstyle{break}
\newtheorem{thm}{Theorem}

\theoremstyle{definition}
\newtheorem{lemma}[thm]{Lemma}

\theoremstyle{definition}

\theoremstyle{definition}
\newtheorem{remark}{Remark}

\newtheorem{mydef}{Definition}

\newcommand{\RR}{\mathbb{R}}      
\newcommand{\ZZ}{\mathbb{Z}}      
\newcommand{\CC}{\mathbb{C}}
\newcommand{\D}{\text{d}}
\newcommand{\IFF}{if and only if }
\usepackage{physics}

\numberwithin{equation}{section}

\title{On the commutation properties of finite convolution and differential operators I: commutation.} 
\author{Yury Grabovsky, \qquad Narek Hovsepyan}
\date{}

\begin{document}
\maketitle

\begin{abstract}
  The commutation relation $KL = LK$ between finite convolution integral
  operator $K$ and differential operator $L$ has implications for spectral
  properties of $K$. We characterize all operators $K$ admitting this
  commutation relation. Our analysis places no symmetry constraints on the
  kernel of $K$ extending the well-known results of Morrison for real
  self-adjoint finite convolution integral operators.
\end{abstract}

\tableofcontents

\section{Introduction}
The need to understand spectral properties of finite convolution integral operators
\begin{equation} \label{K}
(Ku)(x) = \int_{-1}^1 k(x-y) u(y) \D y 
\end{equation} 
acting on $L^2(-1,1)$ arises in a number of applications, including
optics \cite{frieden}, radio astronomy \cite{bracewell riddle},
\cite{bracewell wernecke}, electron microscopy \cite{gechberg}, x-ray
tomography \cite{grunbaum2}, \cite{tam}, noise theory \cite{davenport root}
and medical imaging \cite{al-aifari katsevich}, \cite{katsevich1},
\cite{katsevich2}, \cite{katsevich tovbis}. In some cases it is possible to
find a differential operator $L$ which commutes with $K$ (cf. \cite{pollak
  slepian,morrison,widom,katsevich1}),

\begin{equation} \label{C3}
KL = L K. \tag{C1}
\end{equation}

\noindent If $k(z)$ is smooth the eigenfunctions of $K$ also have to be smooth
and hence can be chosen to be solutions of
ordinary differential equations. More precisely, \eqref{C3} implies that
eigenspaces $E_\lambda$ of $K$ are invariant under $L$, i.e. $L: E_\lambda
\mapsto E_\lambda$. Now if $L$ is diagonalizable, e.g. self-adjoint, or more
generally, normal (for characterization of normality see Remark~\ref{normal}),
then one can choose a basis for $E_\lambda$ consisting of eigenfunctions of
$L$. This permits to bring the vast literature on asymptotic properties of
solutions of ordinary differential equations to bear on obtaining analytical
information about the asymptotics of eigenvalues and eigenfunctions of
integral operators. With this said, we will not be investigating spectral
properties of differential operators that commute with integral operators. In
our view questions about differential operators are much more tractable than
questions about the integral operators, see e.g. \cite{zettl}, and our goal is
to find all connections between the two questions. 

The most famous example of this phenomenon is the band-and time limited
operator of Landau, Pollak, and Slepian \cite{landau pollak 1}, \cite{landau
  pollak 2}, \cite{pollak slepian}--\cite{slepian2}, corresponding to $k(z) =
\frac{\sin (a z)}{z}$ in \eqref{K} with $a > 0$. Sharp estimates for
asymptotics of the eigenvalues of $K$ were derived using its commutation with
a second order symmetric differential operator, whose eigenfunctions are the
well-known prolate spheroidal wave functions that first appeared in the
context of quantum mechanics \cite{Morse53}. Another example is the result of
Widom \cite{widom}, where using comparison with special operators that commute
with differential operators, the author obtained asymptotic behavior of the
eigenvalues of a large class of integral operators with real-valued even
kernels. A complete characterization of operators \eqref{K} with real even
kernel commuting with symmetric second order differential operators was
achieved by Morrison \cite{morrison} (see also \cite{wright},
\cite{grunbaum}). We are interested in completing Morrison's characterization
to include all complex-valued kernels $k(z)$. In this more general context the
property of commutation must also be generalized, so as to permit the
characterization of eigenfunctions as solutions of an eigenvalue problem for a
second or fourth order differential operator.

A natural extension of commutation, as explained in the introductory section in
\cite{aifariPhD} is
\begin{equation} \label{C2}
\begin{cases}
K L_1 = L_2 K
\\
L_j^* = L_j, \qquad j=1,2
\end{cases},
\tag{C2}
\end{equation}

\noindent where $L_j, \ j=1,2$ are differential operators with complex coefficients. This has implications for singular value decomposition of $K$. It is easy to check that \eqref{C2} reduces to a commutation relation for $K^* K$, indeed we have

\begin{equation} \label{L1 K* K = K* K L1}
L_1 K^*K = K^* K L_1 ,
\end{equation}

\noindent and therefore singular functions of $K$ satisfy ODEs, in the sense explained above. In fact, commuting pairs $(K, L)$, when $K$ is non-compact can also provide instances where singular value decomposition of a related operator can be obtained via \eqref{C2}, as was observed in \cite{al-aifari
  katsevich}, \cite{katsevich1}, \cite{katsevich2}, \cite{katsevich tovbis} in applications to truncated Hilbert transform operators, where $k(z) = 1/z$. In this setting the input function is considered on one interval while the output of $K$ is defined on a different interval. Even though singularity of $k(z)$ may destroy compactness of $K$ (when the two intervals intersect or touch), it was shown in the above cited papers that $K^*K$ possesses a discrete spectrum and singular value decomposition for $K$ can be obtained. 

In this paper we give a complete list of pairs $(K,L)$, satisfying commutation relation \eqref{C3}, under the assumption that $L$ is a second order differential operator with smooth coefficients and $k$ is either analytic at the origin or has a simple\footnote{It is not hard to show that commutation is not possible for higher order poles.} pole at $0$, in which case the integral is understood in the principal value sense (cf. Theorem~\ref{THM commutation}). As a particular consequence we obtain that any finite convolution operator $K$, with analytic kernel at the origin, admitting commutation must be similar to Morrison's operator (cf. Remark~\ref{REM morrison}).

The fact that aside from Morrison's class of compact self-adjoint finite
convolution operators and their conjugates there are no essentially new
examples is remedied in the second part of this work \cite{gr hov sesqui},
where we consider a new kind of commutation relation 
\begin{equation}
  \label{scomm}
  \overline{K} L_1 = L_2K,\qquad K^{*}=K,\quad L_j^T = L_j,\ j=1,2,
\end{equation}
which we call \emph{sesquicommutation}. In this case we are able to prove that
no nontrivial cases arise unless $L_{1}=L_{2}=L$. Moreover, the eigenspaces of
the compact self-adjoint finite convolution operator $K$ are invariant under
the self-adjoint 4th order differential operator $L^{*}L$. To give one
explicit example of the new such pair $(K,L)$ obtained in \cite{gr hov sesqui}
we define
\begin{equation}
  \label{scommex}
k(z)=\frac{e^{-i\frac{\pi}{4}z}}{\cos\frac{\pi}{4}z}+\frac{ze^{i\frac{\pi}{4}z}}{\sin\frac{\pi}{2}z},
\qquad
L = - \tfrac{\D}{\D y} \left[ \cos \left( \tfrac{\pi y}{2}
  \right)\tfrac{\D}{\D y} \right]  + \tfrac{\pi^2}{32} e^{i\tfrac{\pi y}{2} }.
\end{equation}

\section{Preliminaries}

We assume that $z k(z) \in L^2((-2,2), \CC)$ is analytic in a neighborhood of
$0$. This includes two cases: regular, when $k$ is analytic at $0$, and
singular, when $k$ has a simple pole at $0$, in which case the integral is
understood in the principal value sense. Further, assume that $L, L_j$ are second order differential operators:

\begin{equation} \label{L}
\begin{cases}
L u = \mathcal{a} u'' + \mathcal{b} u' + \mathcal{c} u,
\\
\mathcal{a}(\pm 1) = 0, \ \mathcal{b}(\pm 1) = \mathcal{a}'(\pm 1) ,
\end{cases}
\end{equation}

\noindent where the indicated boundary conditions are necessary for the above
commutation relations to hold. These are also necessary for the adjoint
operator to be a differential operator as well. Thus various classes of
operators, such as self-adjoint, symmetric or normal can be described by specifying additional constraints on the coefficients of $L$, always assuming that the boundary conditions in \eqref{L} hold.

When $k$ is smooth in $[-2,2]$, formulating commutation relations \eqref{C3}
and \eqref{C2} in terms of the kernel $k(z)$ and the coefficients of $L$ is a
matter of integration by parts, which
due to the imposed boundary conditions lead, respectively, to

\begin{equation}\label{R3}
\begin{split}
[\mathcal{a}(y+z)-\mathcal{a}(y)] k''(z) + [2\mathcal{a}'(y) + \mathcal{b}(y+z) - \mathcal{b}(y)] k'(z) +& \\
+[\mathcal{c}(y+z) - \mathcal{c}(y) + \mathcal{b}'(y) - \mathcal{a}''(y)] k(z) &=0,
\end{split}
\tag{R1}
\end{equation}

\begin{equation}\label{R2}
\begin{split}
[\mathcal{a}_2(y+z)-\mathcal{a}_1(y)] k''(z) + [2\mathcal{a}_1'(y) + \mathcal{b}_2(y+z) - \mathcal{b}_1(y)] k'(z) +& \\
+[\mathcal{c}_2(y+z) - \mathcal{c}_1(y) + \mathcal{b}_1'(y) - \mathcal{a}_1''(y)] k(z) &=0,
\end{split}
\tag{R2}
\end{equation}

\noindent where $\mathcal{a}_j, \mathcal{b}_j, \mathcal{c}_j$ denote the
coefficients of $L_j$ for $j=1,2$. Less obviously (see Remark~\ref{REM k has a
  pole}), the same relation \eqref{R3} holds if $k$ has a simple pole at $0$.

The main idea of the proofs is to analyze these relations by taking sufficient
number of derivatives in $z$ and evaluating the result at $z=0$. This allows
one to find linear differential relations between the coefficients of the
differential operators, narrowing down the set of possibilities to families of
functions depending on finitely many parameters. Returning to the original
relations \eqref{R3}, \eqref{R2} we obtain necessary and sufficient conditions
for commutation that can be completely analyzed, resulting in the explicit
listing of all pairs $(k,L)$ satisfying \eqref{R3}.

\begin{remark}
The complete analysis of \eqref{C2} beyond the instances generated by \eqref{C3}, can also be achieved by our approach, but will require substantially more work. We remark that in this case too it can be shown that either $k$ is trivial or the coefficients of $L_1$ and $L_2$ are linear combinations of polynomials multiplied by exponentials.     
\end{remark}

\section{Main Results}

\begin{mydef} \label{trivial DEF}

We will say that $k$ (or operator $K$) is \textit{trivial}, if it is a finite linear combination of exponentials $e^{\alpha z}$ or has the form  $e^{\alpha z} p(z)$, where $p(z)$ is a polynomial. Note that in this case $K$ is a finite-rank operator.

\end{mydef}

\begin{remark} \label{REM multiplier} When $K$ commutes with $L$, then $M K
  M^{-1}$ commutes with $M L M^{-1}$. If $M$ is the multiplication operator by
  $z \mapsto e^{\tau z}$, then $M K M^{-1}$ is a finite convolution operator
  with kernel $k(z) e^{\tau z}$ (where $k$ is the kernel of $K$) and $M L
  M^{-1}$ is a second order differential operator with the same leading
  coefficient as $L$. Moreover, one can also add any complex constant to
  $\mathcal{c}(y)$ in \eqref{L}, as well as multiply $k$, as well as $L$ by
  arbitrary complex constants without affecting commutation.  With this
  observation the results of Theorem~\ref{THM commutation} are stated up to
  such transformations in order to achieve the most concise form of the results.
\end{remark}

\noindent In theorem below all parameters are complex, unless specified otherwise. 

\begin{thm}[Commutation \eqref{C3}] \label{THM commutation}
Let $K, L$ be given by \eqref{K} and \eqref{L} with $\mathcal{a}, \mathcal{b}, \mathcal{c}$ smooth in $[-2,2]$. Assume $k$ is smooth in $[-2,2] \backslash \{0\}$ and either it

\begin{enumerate}
\item[(i)] is analytic at $0$, not identically zero near $0$ and is nontrivial in the sense of Definition~\ref{trivial DEF}.

\item[(ii)] has a simple pole at $0$. 
\end{enumerate}

\noindent If \eqref{R3} holds, then (in case $\lambda$ or $\mu = 0$ appropriate limits must be taken)

\begin{equation} \label{k general}
 k(z) = \frac{\lambda }{\sinh \left( \frac{\lambda}{2} z \right)} \left( \alpha_1 \frac{\sinh(\mu z)}{\mu} + \alpha_2 \cosh(\mu z) \right)
\end{equation}

\begin{equation} \label{a,b,c general}
\begin{cases}
\mathcal{a}(y) = \frac{1}{\lambda^2} \left[ \cosh(\lambda y) - \cosh \lambda \right]
\\
\mathcal{b}(y)= \mathcal{a}'(y)
\\
\mathcal{c}(y)= \left( \tfrac{\lambda^2}{4} - \mu^2 \right) \mathcal{a}(y)
\end{cases}
\end{equation}

\noindent For some special choices of parameters, the differential operator commuting with $K$ is more general than the one given by \eqref{a,b,c general}. Below we list all such cases:

\begin{enumerate}

\item $\alpha_1 = 0, \ \lambda = \pi i, \ \mu = \frac{2m+1}{4} \lambda$ with $m \in \ZZ$:

\begin{equation*}
k(z) = \frac{\cos \left( \tfrac{\pi (2m+1)}{4}z \right)} { \sin \left( \tfrac{\pi}{2}z \right)}
\qquad \text{and} \qquad
\begin{cases}
\mathcal{a}(y) = \alpha \left( e^{\pi i y} - e^{\pi i} \right) + \beta \left( e^{-\pi i y} - e^{-\pi i} \right)
\\
\mathcal{b}(y)= \mathcal{a}'(y)
\\
\mathcal{c}(y)= \frac{\pi^2}{4} \left[ \frac{(2m+1)^2}{4} - 1 \right] \mathcal{a}(y)
\end{cases}
\end{equation*}

\noindent When $\alpha = \beta$ \eqref{a,b,c general} is recovered.

\item $\alpha_1 = \mu =0$, then with $\mathcal{a}_0(y)=\cosh(\lambda y) - \cosh\lambda$:

\begin{equation*}
k(z) = \frac{1}{\sinh \left( \tfrac{\lambda}{2} z \right)}
\qquad \text{and} \qquad
\begin{cases}
\mathcal{a} (y) = \alpha \mathcal{a}_0(y)
\\
\mathcal{b}(y)=\alpha \mathcal{a}_0'(y) + \beta \mathcal{a}_0(y)
\\
\mathcal{c}(y)= \frac{\beta}{2} \mathcal{a}_0'(y) + \alpha \frac{\lambda^2}{4} \mathcal{a}_0(y)
\end{cases}
\end{equation*}

\noindent When $\beta = 0$ \eqref{a,b,c general} is recovered.

\item $\mu = \lambda = 0$, then with $\mathcal{p}(y)$ an arbitrary polynomial of order at most two such that $\mathcal{p}'(0)=0$:

\begin{equation*}
k(z) = \frac{1}{\beta} + \frac{1}{z}
\qquad \text{and} \qquad
\begin{cases}
\mathcal{a}(y)=(y^2-1)\mathcal{p}(y) 
\\
\mathcal{b}(y)=\mathcal{a}'(y) + \beta y \mathcal{p}'(y) - \beta \mathcal{p}''(y)
\\
\mathcal{c}(y)=\beta \mathcal{p}'(y)
\end{cases}
\end{equation*}

\noindent When $\mathcal{p}(y) \equiv 1$ \eqref{a,b,c general} is recovered.

\item $\mu = \lambda = \alpha_1 = 0$, then with $\mathcal{p}(y)$ an arbitrary polynomial of order at most two:

\begin{equation*}
k(z) = \frac{1}{z}
\qquad \text{and} \qquad
\begin{cases}
\mathcal{a}(y)=(y^2-1)\mathcal{p}(y) 
\\
\mathcal{b}(y)=a'(y) + \beta (y^2-1)
\\
\mathcal{c}(y)=y \mathcal{p}'(y) + \beta y
\end{cases}
\end{equation*}

\noindent When $\mathcal{p}(y) \equiv 1$ and $\beta = 0$ \eqref{a,b,c general} is recovered.

\end{enumerate}

\end{thm}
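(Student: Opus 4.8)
The plan is to treat the functional equation \eqref{R3} as an identity in the two independent real (or complex) variables $y$ and $z$, and to extract from it a finite system of ordinary differential equations for the coefficients $\mathcal{a},\mathcal{b},\mathcal{c}$ and for $k$. The key structural observation is that \eqref{R3} has the shape $A(y,z)k''(z)+B(y,z)k'(z)+C(y,z)k(z)=0$, where each of $A,B,C$ is a difference of a function evaluated at $y+z$ and a function evaluated at $y$ (plus lower-order correction terms). Since $k,k',k''$ are linearly dependent only in the trivial case excluded by hypothesis (i), one cannot simply set the coefficients to zero; instead the three coefficient functions must themselves be proportional to $k$ in a compatible way. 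The first step, therefore, is to differentiate \eqref{R3} repeatedly in $z$ and evaluate at $z=0$, as the paper advertises. Writing $a_n=\mathcal{a}^{(n)}(y)$, etc., and $k_n=\lim_{z\to 0}k^{(n)}(z)$, each such evaluation produces a linear ODE relating $\mathcal{a},\mathcal{b},\mathcal{c}$ and their derivatives with constant (in $y$) coefficients built from the Taylor data of $k$ at $0$.

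Next I would show that this cascade of ODEs forces $\mathcal{a}$ to satisfy a constant-coefficient linear ODE. The natural guess, motivated by \eqref{a,b,c general}, is that $\mathcal{a}''=\lambda^2\mathcal{a}+\text{const}$, i.e. $\mathcal{a}$ is a combination of $\cosh(\lambda y)$, $\sinh(\lambda y)$ and constants; the normalizations of Remark~\ref{REM multiplier} (the gauge $k\mapsto k e^{\tau z}$, which shifts $\mathcal{b}$, together with the freedom to add a constant to $\mathcal{c}$ and to rescale) should then let me reduce to $\mathcal{b}=\mathcal{a}'$ and fix the particular normalization $\mathcal{a}(y)=\lambda^{-2}(\cosh(\lambda y)-\cosh\lambda)$ that already encodes the boundary conditions $\mathcal{a}(\pm1)=0$ from \eqref{L}. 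Once $\mathcal{a}$ and $\mathcal{b}$ are pinned down, the relation \eqref{R3} at the level of the $k$-coefficient forces $\mathcal{c}=(\tfrac{\lambda^2}{4}-\mu^2)\mathcal{a}$ for a second spectral parameter $\mu$, and substituting this back into \eqref{R3} yields a single second-order constant-coefficient ODE in $z$ for $k$ whose characteristic roots are $\pm\tfrac{\lambda}{2}\pm\mu$; solving it and re-imposing the admissible singularity structure at $z=0$ (analytic, or simple pole in case (ii)) produces exactly the two-parameter family \eqref{k general}, with the $\sinh(\tfrac{\lambda}{2}z)$ in the denominator supplying the simple pole precisely when required.

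The final step is to isolate the degenerate parameter regimes where the commuting operator is non-unique, giving the four enumerated exceptional cases. These arise precisely when the characteristic roots of the $\mathcal{a}$-equation or the $k$-equation collide or when $k$ loses information about certain coefficients: $\lambda=0$ degenerates $\cosh(\lambda y)$ into a quadratic polynomial, which is why cases~3 and~4 feature an arbitrary polynomial $\mathcal{p}(y)$ of degree at most two replacing the rigid $\mathcal{a}_0$; $\mu=0$ (case~2) collapses the $\cosh(\mu z)$ factor and frees up an extra parameter $\beta$ in $\mathcal{b}$ and $\mathcal{c}$; and the resonance $\mu=\tfrac{2m+1}{4}\lambda$ with $\lambda=\pi i$ (case~1) is the arithmetic condition under which the would-be transcendental kernel $k$ becomes a ratio of trigonometric functions, decoupling the two exponential directions in $\mathcal{a}$ so that $\alpha\neq\beta$ becomes admissible. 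In each case I would verify directly, by substitution into \eqref{R3}, that the listed triple genuinely satisfies the relation, and conversely that the degeneracy analysis of the generic argument leaves no further freedom.

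I expect the main obstacle to be the bookkeeping in the degenerate cases rather than the generic derivation: showing that the extra parameters ($\alpha,\beta$, or the coefficients of $\mathcal{p}$) are the \emph{only} additional freedoms, and that no spurious solutions hide in the limits $\lambda\to0$ or $\mu\to0$, requires carefully tracking which constraints from the differentiated relations become vacuous as the characteristic roots merge. A secondary subtlety, flagged in the excerpt's footnote and in Remark~\ref{REM k has a pole}, is justifying that \eqref{R3} continues to hold verbatim when $k$ has a simple pole at $0$ and the integral in \eqref{K} is a principal value, so that the pole case (ii) can be handled by the same functional equation; I would treat the principal-value integration by parts as a separate lemma and then run the identical ODE analysis, with the admissible-singularity condition at $z=0$ now selecting the pole branch of \eqref{k general}.
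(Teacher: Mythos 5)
Your overall strategy --- differentiate \eqref{R3} repeatedly in $z$, evaluate at $z=0$ to get linear ODEs for $\mathcal{a},\mathcal{b},\mathcal{c}$ with coefficients built from the Taylor data of $k$, then substitute the resulting forms back and separate variables in $y$ to obtain ODEs for $k$ --- is exactly the paper's strategy, and for the regular case (i) your sketch essentially reproduces the paper's Section~4: there the procedure terminates quickly, yielding $\mathcal{b}=\mathcal{a}'$, $\mathcal{c}=\nu\mathcal{a}$ and $\mathcal{a}'''+\alpha\mathcal{a}'=0$, after which \eqref{R3} decomposes along $e^{\pm\lambda y}$ (or monomials) and each $z$-coefficient gives the ODE $k''+\lambda\coth(\tfrac{\lambda}{2}z)k'+\nu k=0$, solved by the substitution $u=k\sinh(\tfrac{\lambda}{2}z)$. (One small correction: the paper \emph{does} set the $y$-basis coefficients to zero by linear independence; the resulting equation for $k$ is not constant-coefficient but becomes so only after that substitution.)

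The genuine gap is in the singular case (ii), where you write that ``the natural guess, motivated by \eqref{a,b,c general}, is that $\mathcal{a}''=\lambda^2\mathcal{a}+\mathrm{const}$.'' That guess is the conclusion, not something the differentiation cascade delivers. When $k$ has a simple pole, the evaluation-at-zero procedure (applied to $z^3$ times \eqref{R3}) only yields a fourth- or sixth-order constant-coefficient ODE for $\mathcal{a}$, and $\mathcal{b},\mathcal{c}$ are \emph{not} forced to equal $\mathcal{a}'$ and $\nu\mathcal{a}$ at this stage --- they may acquire extra exponential frequencies or higher-degree polynomial parts. The admissible forms for $\mathcal{a}$ therefore a priori include sums of up to three distinct exponential pairs, exponentials multiplied by nonconstant polynomials, mixed exponential-plus-polynomial expressions, and polynomials of degree up to six. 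The bulk of the paper's proof (three reduction lemmas plus the appendix) is devoted to killing these: one shows the pair $\{\lambda,-\lambda\}$ is recoverable from the asymptotics of $k$ at $\pm\infty$, so two incompatible exponential frequencies cannot coexist; one uses linear independence of $ze^{\gamma z}$ and $e^{\tilde\gamma z}$ to rule out mixing exponentials with polynomials; and the appendix rules out nonconstant polynomial factors. Your proposal contains no mechanism for any of this. Similarly, the four exceptional items are not merely ``root collisions'': items 3 and 4 emerge from a degree-by-degree analysis of polynomial $\mathcal{a}$ of degree $3$ and $4$ (e.g.\ $\mathcal{a}=(y^2-1)(y^2-\sigma_1^2)$ with $k=\tfrac{2}{b_2}+\tfrac1z$), which is a separate finite case analysis rather than a limit of the generic family, so ``tracking which constraints become vacuous as roots merge'' would not find them. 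You do correctly flag the principal-value justification of \eqref{R3}, which matches Remark~\ref{REM k has a pole}.
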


\begin{remark} \label{REM lambda in iR}
If $\lambda \in i\RR$, then $k(z)$ may become singular at $z \in [-2,2]\backslash \{0\}$. In order to exclude these cases we need to require either 

\begin{enumerate}
\item[$\bullet$] $|\lambda| < \pi$, or

\item[$\bullet$] $\pi \leq |\lambda| < 2 \pi$ and $\alpha_1 = 0, \ \mu = \lambda \frac{2m+1}{4}$ for some $m \in \ZZ$
\end{enumerate}
\end{remark}

\begin{remark} \mbox{} \label{REM morrison}
 Morrison's result corresponds to the analytic case: $\alpha_2 = 0$
  and when $k$ is even and real-valued. According to Theorem~\ref{THM
    commutation} all integral operators with analytic $k(z)$ that commute with
  a differential operator are similar to Morrison's operator and therefore their
  spectrum can be determined using Morrison's results.
\end{remark}

\begin{remark}
As we have already mentioned, the connections between the coefficient functions of the differential operators are obtained by differentiating the relation \eqref{R3} appropriate number of times and setting $z=0$. Smoothness of coefficients, analyticity of $k$ at zero (the fact that $k$ is nontrivial and that it doesn't vanish near $0$) are used at this stage, to argue that the differentiation procedure can be terminated at some point and the connections between the coefficient functions will follow. Thus, the original assumptions can be replaced by requiring appropriate degree of smoothness on $k$ and the coefficient functions and that some expressions involving $k^{(j)}(0)$ are not zero. These expressions can be easily found from our analysis. For example the hypotheses of Theorem~\ref{THM commutation} (case $(i)$) can be replaced by $\mathcal{a},\mathcal{b},\mathcal{c},k \in C^3$ and $k^2(0) k''(0) - k(0) k'(0) \neq 0$ (cf. Section~\ref{Comm analytic SECTION}). Analogous changes can be made in case $(ii)$ of Theorem~\ref{THM commutation}.
\end{remark}

\begin{remark} \label{REM k has a pole}
When $k$ has a pole at zero, the commutation is understood in the principal value sense, namely

\begin{equation*}
\lim_{\epsilon \to 0} \int_{[-1,1] \backslash B_\epsilon (x)} k(x-y) Lu(y) \D y - L \int_{[-1,1] \backslash B_\epsilon (x)} k(x-y) u(y) \D y = 0 .
\end{equation*} 

\noindent After integrating by parts, this can be rewritten as

\begin{equation*}
\lim_{\epsilon \to 0} \int_{[-1,1] \backslash B_\epsilon (x)} F(x,y) u(y) \D y + \Phi(u, x, \epsilon) = 0 ,
\end{equation*}

\noindent where $F(x,y)$ is the left-hand side of \eqref{R3} with $z = x-y$ and

\begin{equation*}
\begin{split}
\Phi(u, x, \epsilon) =& k(\epsilon) \Big\{ \big[ \mathcal{a}(x-\epsilon) - \mathcal{a}(x) \big] u'(x-\epsilon) + \big[ \mathcal{b}(x-\epsilon) - \mathcal{b}(x) - \mathcal{a}'(x-\epsilon)\big] u(x-\epsilon)  \Big\} -
\\
-& k(-\epsilon) \Big\{ \big[ \mathcal{a}(x+\epsilon) - \mathcal{a}(x) \big] u'(x+\epsilon) + \big[ \mathcal{b}(x+\epsilon) - \mathcal{b}(x) - \mathcal{a}'(x+\epsilon) \big] u(x+\epsilon)  \Big\} +
\\
+& k'(\epsilon) u(x-\epsilon) \big[ \mathcal{a}(x-\epsilon) - \mathcal{a}(x) \big] - k'(-\epsilon) u(x+\epsilon) \big[ \mathcal{a}(x+\epsilon) - \mathcal{a}(x) \big] .
\end{split}
\end{equation*}

\noindent Expanding $\Phi(u, x, \epsilon)$ in $\epsilon$ we observe that all terms
up to $O(\epsilon)$ cancel out and hence, $\lim_{\epsilon \to 0} \Phi(u, x,
\epsilon) = 0$. Therefore we conclude $F(x,y) = 0$ for $y \neq x$, resulting
in the same relation \eqref{R3}, as in smooth case.

\end{remark}

\begin{remark} \label{normal}

\noindent As was discussed in the introduction one might want to check whether $L$ (given by \eqref{L}) is normal: $L L^* = L^* L$. Recall that

\begin{equation*}
L^* u = \overline{\mathcal{a}} u'' + (2 \overline{\mathcal{a}}' - \overline{\mathcal{b}}) u' + (\overline{\mathcal{a}}'' - \overline{\mathcal{b}}' + \overline{\mathcal{c}}) u ,
\end{equation*}

\noindent therefore we find

\begin{equation*}
L = L^* \quad \Longleftrightarrow \quad \Im \mathcal{a} = 0, \quad \Re \mathcal{b} = \mathcal{a}' \quad \text{and} \quad \Im \mathcal{c} = \tfrac{1}{2} \Im \mathcal{b}' .
\end{equation*}

\noindent To analyze the normality relation, we first give the conditions for commutation of $L$ with another differential operator $D u = \mathcal{A} u'' + \mathcal{B} u' + \mathcal{C} u$, assuming $\mathcal{a} \neq 0$. One can find that

\begin{equation*}
\begin{split}
LDu &= \mathcal{a} \mathcal{A} u^{(4)} + \left[\mathcal{a}(2\mathcal{A}' + \mathcal{B}) + \mathcal{b} \mathcal{A}\right] u^{(3)} + \left[ \mathcal{a} (\mathcal{A}'' + 2 \mathcal{B}' + \mathcal{C}) + \mathcal{b} (\mathcal{A}' + \mathcal{B}) + \mathcal{c} \mathcal{A} \right] u'' +
\\
&+ \left[ \mathcal{a} (\mathcal{B}'' + 2 \mathcal{C}') + \mathcal{b} (\mathcal{B}' + \mathcal{C}) + \mathcal{c} \mathcal{B} \right] u'
+ \left[ \mathcal{a} \mathcal{C}'' + \mathcal{b} \mathcal{C}' + \mathcal{c} \mathcal{C} \right] u .
\end{split}
\end{equation*}

\noindent Comparing this with an analogous expression for $DLu$ and equating the coefficients of corresponding derivatives of $u$ we obtain that $LD = DL$ is equivalent to

\begin{equation} \label{LD = DL}
\begin{cases}
\mathcal{a} \mathcal{A}' = \mathcal{A} \mathcal{a}'
\\
2 \mathcal{a} \mathcal{B}' + \mathcal{b} \mathcal{A}' =  2 \mathcal{A} \mathcal{b}' + \mathcal{B} \mathcal{a}'
\\
\mathcal{a} \mathcal{B}'' + 2 \mathcal{a} \mathcal{C}' + \mathcal{b} \mathcal{B}' = \mathcal{A} \mathcal{b}'' + 2 \mathcal{A} \mathcal{c}' + \mathcal{B} \mathcal{b}'
\\
\mathcal{a} \mathcal{C}'' + \mathcal{b} \mathcal{C}' = \mathcal{A} \mathcal{c}'' + \mathcal{B} \mathcal{c}'
\end{cases}
\end{equation}

\noindent The first equation of \eqref{LD = DL} implies $\mathcal{A} = \alpha \mathcal{a}$ for some $\alpha \in \CC$. Using this in the second equation of \eqref{LD = DL} we get $\beta \mathcal{a} = (\mathcal{B} - \alpha \mathcal{b})^2$ for some $\beta \in \CC$. The third relation reads

\begin{equation*}
\begin{split}
\mathcal{C}' &= \alpha \mathcal{c}' - \tfrac{1}{2} (\mathcal{B}'' - \alpha \mathcal{b}'') + \frac{\mathcal{B} \mathcal{b}' - \mathcal{b} \mathcal{B}'}{2 \mathcal{a}} =
\\
&= \alpha \mathcal{c}' + \frac{\beta}{2} \frac{\left( \mathcal{b}' - \tfrac{\mathcal{a}''}{2} \right) (\mathcal{B}- \alpha\mathcal{b}) - \left( \mathcal{b} - \tfrac{\mathcal{a}'}{2} \right) (\mathcal{B}'- \alpha\mathcal{b}')}{(\mathcal{B}- \alpha\mathcal{b})^2} ,
\end{split}
\end{equation*} 

\noindent where in the last step we used the identity $2\mathcal{a}(\mathcal{B}''- \alpha\mathcal{b}'') = \mathcal{a}'' (\mathcal{B}- \alpha\mathcal{b}) - \mathcal{a}' (\mathcal{B}'- \alpha\mathcal{b}')$. Integrating, we find $\mathcal{C} = \alpha \mathcal{c} + \frac{1}{2} f + \text{const}$, where 

\begin{equation*}
f = \frac{\beta}{2} \ \frac{2\mathcal{b} - \mathcal{a}'}{\mathcal{B} - \alpha \mathcal{b}} .
\end{equation*}

\noindent When $\mathcal{B} = \alpha \mathcal{b}$, then $\beta = 0$ and by convention we assume $f = 0$. Finally, substituting the expression for $\mathcal{C}$, the fourth equation of \eqref{LD = DL} can be simplified to

\begin{equation*}
2 \beta \mathcal{c}' = (\mathcal{B} - \alpha \mathcal{b}) f'' + \frac{\beta \mathcal{b}}{\mathcal{B} - \alpha \mathcal{b}} f' =  \left[(\mathcal{B} - \alpha \mathcal{b}) f' \right]' + f f' .
\end{equation*}

\noindent Now we integrate the last relation and putting everything together we conclude

\begin{equation*}
LD = DL \quad \Longleftrightarrow \quad
\begin{cases}
\mathcal{A} &= \alpha \mathcal{a},
\\
\beta \mathcal{a} &= (\mathcal{B} - \alpha \mathcal{b})^2,
\\
\mathcal{C} &= \alpha \mathcal{c} + \frac{1}{2} f + \text{const},
\\
2 \beta \mathcal{c} &= (\mathcal{B} - \alpha \mathcal{b}) f' + \frac{1}{2} f^2 + \text{const}.
\end{cases}
\end{equation*}

Write $L = L_0 + L_1$, where $2L_0 = L + L_*$ is self-adjoint and $2L_1 = L - L_*$ is skew-adjoint. Clearly $L$ is normal, \IFF $L_0$ commutes with $L_1$. The coefficient of $\frac{\D^2}{\D x^2}$ in $L_0$ is $\Re \mathcal{a}$ and in $L_1$ is $i \Im \mathcal{a}$. The first equation for commutation of $L_0, L_1$ implies $\Im \mathcal{a} = \alpha \Re \mathcal{a}$ for some $\alpha \in \RR$. W.l.o.g. we may take $\alpha = 0$. Indeed, $L$ is normal \IFF $\tilde{L}=(1-i\alpha) L$ is normal. Now the coefficient of $\frac{\D^2}{\D x^2}$ in $\tilde{L}_1$ is $\frac{1}{2} [(1-i\alpha) \mathcal{a} - (1+i\alpha) \overline{\mathcal{a}}] = 0$. Thus, w.l.o.g. $L=L_0 + L_1$ where $L_0$ is a second order self-adjoint operator and $L_1$ is of first order and skew-adjoint.  Simplifying commutation relations for $L_0, L_1$ we find

\begin{equation*}
\begin{split}
L L^* = L^* L \quad &\text{and} \quad L \neq L^*, \qquad \text{iff}
\\[.2in]
\begin{cases}
L = L_0 + \gamma L_1, \quad \gamma \in \RR \backslash \{0\}, \\
L_0u = \mathcal{a} u''+ \mathcal{b}_0 u' + \mathcal{c}_0 u, \\
L_1u = \mathcal{b}_1u' + \mathcal{c}_1 u,
\end{cases}
\quad &\text{and} \quad
\begin{cases}
\mathcal{a} \in \RR \quad \text{and w.l.o.g.} \ \mathcal{a}>0,
\\
\mathcal{b}_1 = \sqrt{\mathcal{a}},
\\[.1in]
\mathcal{c}_1 = \dfrac{2 \mathcal{b}_0 - \mathcal{a}'}{\sqrt{\mathcal{a}}} + i\RR,
\\
\Re \mathcal{b}_0 = \mathcal{a}',
\\
4\mathcal{c}_0 = 2 \mathcal{b}_0' - \mathcal{a}'' + \dfrac{(\mathcal{a}'-2 \mathcal{b}_0)(3\mathcal{a}' - 2\mathcal{b}_0)}{2\mathcal{a}} + \RR.
\end{cases}
\end{split}
\end{equation*}

\noindent The listed conditions in particular imply that $L_0$ is self adjoint and $L_1$ is skew-adjoint. 
\end{remark}

\section{Commutation, regular case} \label{Comm analytic SECTION}

\begin{lemma} \label{LEM a b c comm}
Assume the setting of Theorem~\ref{THM commutation} case $(i)$, then for some complex constants $\alpha, \nu$ we have

\begin{equation} \label{a ode comm}
\mathcal{a}'''(y) + \alpha \mathcal{a}'(y) = 0, \qquad \mathcal{b}(y) = \mathcal{a}'(y), \qquad \mathcal{c}(y) = \nu \mathcal{a}(y) .
\end{equation}

\end{lemma}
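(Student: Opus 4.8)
The plan is to extract the three relations in \eqref{a ode comm} by differentiating \eqref{R3} repeatedly in $z$ and evaluating at $z=0$, exploiting the fact that $k$ is analytic and nontrivial there. Writing \eqref{R3} as $A(y,z)k''(z)+B(y,z)k'(z)+C(y,z)k(z)=0$ with
\begin{align*}
A(y,z)&=\mathcal{a}(y+z)-\mathcal{a}(y),\\
B(y,z)&=2\mathcal{a}'(y)+\mathcal{b}(y+z)-\mathcal{b}(y),\\
C(y,z)&=\mathcal{c}(y+z)-\mathcal{c}(y)+\mathcal{b}'(y)-\mathcal{a}''(y),
\end{align*}
I first record the base values at $z=0$: $A=0$, $B(y,0)=2\mathcal{a}'(y)$, and $C(y,0)=\mathcal{b}'(y)-\mathcal{a}''(y)$. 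Setting $z=0$ in \eqref{R3} immediately yields $[2\mathcal{a}'(y)]k'(0)+[\mathcal{b}'(y)-\mathcal{a}''(y)]k(0)=0$, a first linear relation among the coefficient functions with constant coefficients $k(0),k'(0)$.

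Next I would differentiate \eqref{R3} in $z$ once and twice and again set $z=0$, using $\partial_z A|_{0}=\mathcal{a}'(y)$, $\partial_z^2 A|_0=\mathcal{a}''(y)$, $\partial_z B|_0=\mathcal{b}'(y)$, $\partial_z C|_0=\mathcal{c}'(y)$, and so on. Each differentiation produces a new scalar equation whose coefficients are the derivatives $k^{(j)}(0)$ and whose unknowns are $\mathcal{a},\mathcal{a}',\mathcal{a}'',\mathcal{a}''',\mathcal{b},\mathcal{b}',\mathcal{c},\mathcal{c}'$ evaluated at $y$. The aim is to generate enough such equations (through the third $z$-derivative, consistent with the $C^3$/determinant hypothesis mentioned in the remark) so that the system can be solved for $\mathcal{b}$ and $\mathcal{c}$ in terms of $\mathcal{a}$ and its derivatives. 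I expect the relation $\mathcal{b}=\mathcal{a}'$ to emerge early, and $\mathcal{c}=\nu\mathcal{a}$ to follow after eliminating $\mathcal{b}$; substituting these back into one of the higher-order equations should collapse it into the constant-coefficient ODE $\mathcal{a}'''+\alpha\mathcal{a}'=0$.

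The main obstacle is showing that the differentiation procedure terminates, i.e. that after finitely many steps the linear system in the coefficient functions is nondegenerate enough to pin down $\mathcal{b}$ and $\mathcal{c}$ and to force the third-order ODE for $\mathcal{a}$. This is precisely where nontriviality and non-vanishing of $k$ near $0$ are used: the relevant determinant built from $k(0),k'(0),k''(0),\dots$ (the remark singles out $k^2(0)k''(0)-k(0)k'(0)$) must be nonzero, for otherwise $k$ would itself satisfy a constant-coefficient linear ODE near $0$ and hence be trivial in the sense of Definition~\ref{trivial DEF}, contradicting the hypothesis. So I would argue by contradiction: if the natural combination of $k$-derivatives vanished identically, then $k$ would solve a finite-order constant-coefficient ODE and be a combination of exponentials times polynomials, i.e.\ trivial. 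Ruling this out guarantees the system is solvable and yields \eqref{a ode comm}.

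Once the two algebraic relations $\mathcal{b}=\mathcal{a}'$ and $\mathcal{c}=\nu\mathcal{a}$ are in hand, I would verify the remaining $z$-derivative equations are automatically consistent and reduce to the single scalar ODE $\mathcal{a}'''+\alpha\mathcal{a}'=0$ with a constant $\alpha$ determined by the ratios of $k^{(j)}(0)$; here the constancy of $\alpha$ and $\nu$ in $y$ follows because the $k$-derivatives evaluated at $0$ are genuine constants. This completes the derivation of \eqref{a ode comm}.
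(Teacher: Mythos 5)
Your overall strategy --- expanding $k$ at $0$ and repeatedly differentiating \eqref{R3} in $z$ at $z=0$ --- is exactly the paper's, but as written the plan has a concrete gap at the very first step. The $z=0$ equation is $2k'(0)\mathcal{a}'(y)+[\mathcal{b}'(y)-\mathcal{a}''(y)]k(0)=0$; if $k'(0)\neq 0$ this integrates (using the boundary conditions in \eqref{L}) to $\mathcal{b}=\mathcal{a}'-2\tfrac{k'(0)}{k(0)}\mathcal{a}$, not to $\mathcal{b}=\mathcal{a}'$. The paper first disposes of the case $k(0)=0$ (an induction on \eqref{nth derivative of comm relation} shows all $k^{(j)}(0)=0$, so $k\equiv 0$ near $0$, excluded by hypothesis) and then invokes Remark~\ref{REM multiplier} to conjugate by the multiplier $e^{-k'(0)z/k(0)}$ and normalize $k'(0)=0$; only then does $\mathcal{b}=\mathcal{a}'$ follow. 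Neither preparatory step appears in your proposal, and without the normalization the stated conclusion is simply false as an identity (it holds only up to the transformations of Remark~\ref{REM multiplier}).

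Beyond that, the heart of the lemma --- which derivatives to take and what they yield --- is left at the level of ``I expect'' and ``should collapse,'' whereas the bookkeeping is where the content lies: $n=1$ gives $\mathcal{c}=\nu\mathcal{a}$ with $\nu=-3k''(0)/k(0)$; $n=2$ then forces $k'''(0)=0$; and only $n=3$ produces $k(0)k''(0)\,\mathcal{a}'''+\bigl(5k(0)k^{(4)}(0)-9k''(0)^2\bigr)\mathcal{a}'=0$, which is the desired ODE precisely when $k''(0)\neq 0$. The degenerate case $k''(0)=0$ is not settled by your heuristic that ``$k$ satisfies a constant-coefficient ODE, hence is trivial'': what the paper actually proves there is that $k''(0)=0$ (together with the normalization $k'(0)=0$) propagates through \eqref{nth derivative of comm relation} to give $k^{(j)}(0)=0$ for all $j\ge 1$, i.e.\ $k$ is constant near $0$ and hence trivial. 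Your contradiction argument points in the right direction, but the termination of the procedure --- which you correctly single out as the main obstacle --- is asserted rather than proved; carrying out these two inductions is exactly what closes it.
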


\begin{proof}
Write $k(z) = \sum_{n=0}^\infty \frac{k_n}{n!} z^n$ near $z=0$. The $n$-th derivative of \eqref{R3} w.r.t. $z$ evaluated at $z=0$ reads

\begin{equation} \label{nth derivative of comm relation}
\begin{split}
2\mathcal{a}'(y) k_{n+1} + [\mathcal{b}'(y)-\mathcal{a}''(y)] k_{n} + \sum_{j=0}^{n-1} C_j^n \mathcal{a}^{(n-j)}(y) k_{j+2} +& 
\\
+\sum_{j=0}^{n-1} C_j^n \mathcal{b}^{(n-j)}(y) k_{j+1} + 
\sum_{j=0}^{n-1} C_j^n \mathcal{c}^{(n-j)}(y) k_{j} &= 0 ,
\end{split}
\end{equation}

\noindent where $C_j^n  = {n \choose j}$. The above relation for $n=0$ gives

\begin{equation} \label{n=0 comm relation}
2 k_1 \mathcal{a}'(y) + [\mathcal{b}'(y) - \mathcal{a} ''(y)] k_0 = 0 .
\end{equation}

Assume first $k_0 = 0$, then $k_1 = 0$ (otherwise the boundary conditions imply $\mathcal{a}=0$). By induction one can conclude $k_j = 0$ for any $j$. Indeed, let $k_j = 0$ for $j=0,...,n$, then \eqref{nth derivative of comm relation} reads 

\begin{equation*}
(n+2) \mathcal{a}'(y) k_{n+1} = 0 .
\end{equation*}

\noindent Hence the boundary conditions imply $k_{n+1} = 0$. So if $k_0 = 0$, then $k(z)$ must be identically zero near $z=0$, which we
do not allow. 

Thus $k_0 \neq 0$, and in view of Remark~\ref{REM multiplier} we may assume $k_1 = k'(0) = 0$ (otherwise multiply $k(z)$ by $e^{-k_1/k_0 z}$). Taking into account the boundary conditions, from \eqref{n=0 comm relation} we obtain $\mathcal{b}(y) = \mathcal{a}'(y)$. Now we substitute this in \eqref{nth derivative of comm relation} with $n=1$, integrate the result to find the expression for $\mathcal{c}$ in \eqref{a ode comm} with $\nu = - \tfrac{3k_2}{k_0}$. When $n=2$ equation \eqref{nth derivative of comm relation}, after
elimination of $\mathcal{b}$ and $\mathcal{c}$ becomes $k_3 \mathcal{a}'(y) = 0$
and we conclude that $k_3 = 0$.
When $n = 3$, we find

\[
k_0 k_2 \mathcal{a}'''(y) + (5k_0 k_4 - 9 k_2^2) \mathcal{a}'(y) = 0 .
\]

If $k_2 = 0$, then $k_4 = 0$ and as can be immediately seen from \eqref{nth derivative of comm relation}, induction argument shows that $k_j = 0$ for all $j\ge 1$. 
Thus, we may assume $k_2 \neq 0$, in which case $\mathcal{a}$ satisfies the ODE in \eqref{a ode comm}.
\end{proof}

\vspace{.1in}

\noindent From \eqref{a ode comm} $\mathcal{a}$ has to have one of the following forms, with $a_j \in \CC$

\begin{enumerate}
\item[I.] $\displaystyle \mathcal{a}(y) = a_1 e^{\lambda y} + a_2 e^{- \lambda y} + a_0 $, with $0 \neq \lambda \in \CC$

\item[II.] $\displaystyle \mathcal{a}(y) = a_2 y^2 + a_1 y + a_0 $ 
\end{enumerate}

\vspace{.1in}

\noindent $\bullet$ Assume case I holds, replacing the expressions for $\mathcal{a},\mathcal{b},\mathcal{c}$ from Lemma~\ref{LEM a b c comm}, \eqref{R3} becomes a linear combination of exponentials $e^{\pm \lambda y}$ with coefficients depending only on $z$, hence each coefficient  must vanish. These can be simplified as $a_j \left\{ k'' +  \lambda \coth\left( \frac{\lambda}{2} z \right) k' + \nu k \right\} = 0$ for $j=1,2$. Of course, at least one of $a_1, a_2$ is different from zero and so we deduce

\begin{equation} \label{comm analytic k exp ODE}
\displaystyle k'' + \lambda \coth\left( \tfrac{\lambda}{2} z \right) k' + \nu k = 0 .
\end{equation}  

\noindent Setting $u(z) = k(z) \sinh \left( \tfrac{\lambda}{2} z \right)$,
the above ODE becomes $u'' + \left( \nu - \frac{\lambda^2}{4}
\right) u = 0$. So,

\begin{equation*}
k(z) = \frac{\sinh (\mu z)}{\mu \sinh \left( \tfrac{\lambda}{2} z \right)}
\qquad \qquad \mu^2 = \frac{\lambda^2}{4} - \nu .
\end{equation*}

\noindent When $\mu = 0$, the formula is understood in the limiting sense. Note that this is \eqref{k general} with $\alpha_2 = 0$ (here $\alpha_2$ refers to the
parameter in formula \eqref{k general}, whose vanishing makes $k(z)$ analytic
on $[-2,2]$.) Because $\mathcal{a}(y)$ satisfies the boundary conditions we must have $a_1 = a_2$ or $\lambda \in \pi i n$ for some $n \in \ZZ$. If $\lambda = \pi i n$, then for $k$ to be smooth in $[-2,2]$ we must have $\mu \neq 0$, moreover $\sinh \left( \tfrac{2\mu m}{n} \right) = 0$ for any $m \in \ZZ$ with $\frac{m}{n} \in [-1,1]$. In particular this should hold for $m=1$, which implies $\mu = \frac{\lambda l}{2}$ for some $l \in \ZZ$, which in turn implies that $k$ is a trigonometric polynomial, and hence is trivial. Thus we may assume $\lambda \notin \pi i \ZZ$, and so $a_1 = a_2$, showing that $\mathcal{a}(y) = \cosh(\lambda y) - \cosh \lambda$. 

Now we show that if $\lambda \in i\RR$, then it must hold $|\lambda| < \pi$. Otherwise, $k$ is trivial. Indeed, assume $\lambda \in i\RR$ and $|\lambda| \geq \pi$ we see that the denominator of $k(z)$ has additional zeros at $z=\pm \frac{2 \pi i}{\lambda} \in [-2,2]$. In order for $k$ to be smooth, we require that its numerator also vanishes at these points. So $\sinh \left( \frac{2 \pi i}{\lambda} \mu \right) = 0$ and hence $\mu = \frac{\lambda}{2} m$ for some $m \in \ZZ$. But then, again $k$ is a trigonometric polynomial.

\vspace{.1in}

\noindent $\bullet$ Assume case II holds, then $\mathcal{a}(y) = a_2 (y^2 - 1)$ and substituting into \eqref{R3} we find

\begin{equation}  \label{comm analytic k pol ODE}
z k'' + 2 k' + \nu z k = 0 .
\end{equation}

\noindent Setting $u(z) = z k(z)$ the ODE turns into $u'' + \nu u = 0$, which corresponds to the limiting case $\lambda = 0$ in the formulas for $k$ and $\mathcal{a}$ and concludes the proof of Theorem~\ref{THM commutation} case $(i)$.

\section{Commutation, singular case}

Here we prove Theorems~\ref{THM commutation} case $(ii)$. In the first subsection below we obtain the possible forms for the functions $\mathcal{a},\mathcal{b}$ and $\mathcal{c}$. In the second one we do reduction of these forms, and finally in the third one we find $k$.

\subsection{Forms of $\mathcal{a} ,\mathcal{b}$ and $\mathcal{c}$}

By the assumption $k(z) = z^{-1} (k_0 + k_1 z +...)$, with $k_0 \neq 0$. So by rescaling we let $k_0 = 1$ and in view of Remark~\ref{REM multiplier} we may assume $k_1 = 0$ (otherwise multiply $k(z)$ by $e^{-k_1/k_0 z}$). Multiply \eqref{R3} by $z^3$ and refer to the resulting relation by (E). Differentiate (E) three times w.r.t. $z$ and let $z=0$ to get

\begin{equation} \label{c in terms of b singular comm*}
\mathcal{c}(y) = -\tfrac{1}{3} \mathcal{a}''(y) - 2k_2 \mathcal{a}(y) + \tfrac{1}{2} \mathcal{b}'(y) + \text{const} .
\end{equation} 

\noindent Substitute this into (E), differentiate the result 4 times w.r.t. $z$ and let $z = 0$, then

\begin{equation} \label{b'''}
\mathcal{b}''' =  \mathcal{a}^{(4)} + 24 k_2 \mathcal{a}'' - 72 k_3 \mathcal{a}' -24 k_2 \mathcal{b}' .
\end{equation}

\noindent In the fifth derivative of (E) we replace $b^{(4)}$ and $b'''$ using the above relation, then the result reads

\begin{equation} \label{alpha3 b'}
\alpha_1 \mathcal{b}' = \mathcal{a}^{(5)} + 120 k_2 \mathcal{a}^{(3)} + \alpha_1\mathcal{a}'' + \alpha_2 \mathcal{a}' ,
\end{equation}

\noindent where $\alpha_1 = - 1080 k_3$ and the
expression for $\alpha_2$ is not important. Now if $\alpha_1 = 0$ we got a
linear constant coefficient ODE for $\mathcal{a}$, otherwise we substitute the
formula for $\mathcal{b}'$ from \eqref{alpha3 b'} into \eqref{b'''} and again obtain an ODE for $\mathcal{a}$, more precisely, for some constants $\beta_j \in \CC$, either

\begin{enumerate}
\item[(A)] $\alpha_1 = 0$ and $\mathcal{a}^{(4)} + \beta_1 \mathcal{a}'' + \beta_2 \mathcal{a} = \beta_0$, or

\item[(B)] $\alpha_1 \neq 0$ and $\mathcal{a}^{(6)} + \beta_3 \mathcal{a}^{(4)} + \beta_1 \mathcal{a}'' + \beta_2 \mathcal{a} = \beta_0$

\end{enumerate} 

\noindent Therefore, using the fact that ODEs in (A) and (B) contain only even
derivatives of $\mathcal{a}$, we can conclude that in either case $\mathcal{a}$ has one of the following forms, with $p_j, a_j, \tilde{a}_j \in \CC$; \ $\lambda_j, \lambda, \mu \in \CC \backslash \{0\}$ and $\lambda \neq \pm \mu$ and $\lambda_j \neq \pm \lambda_l$ for $j \neq l$,

\begin{enumerate}
\item[I.] 

\begin{enumerate}
\item[1)] $\displaystyle \mathcal{a}(y) = \sum_{j=1}^3 (a_j e^{\lambda_j y} + \tilde{a}_j e^{-\lambda_j y}) + a_0$

\item[2)] $\displaystyle \mathcal{a}(y) =  \sum_{j=1}^2 (a_j e^{\lambda_j y} + \tilde{a}_j e^{-\lambda_j y})+\sum_{j=0}^2 p_j y^j$

\item[3)] $\displaystyle \mathcal{a}(y) =  a_1 e^{\lambda y} + \tilde{a}_1 e^{-\lambda y} + \sum_{j=0}^4 p_j y^j$
\end{enumerate}

\item[II.] 
\begin{enumerate}

\item[1)] $\displaystyle \mathcal{a}(y) =  (a_1 y+\tilde{a}_1) e^{\lambda y} + (a_2y+\tilde{a}_2) e^{-\lambda y} + a_3 e^{\mu y} + \tilde{a}_3 e^{-\mu y} + a_0$

\item[2)] $\displaystyle \mathcal{a}(y) =  (a_1 y+\tilde{a}_1) e^{\lambda y} + (a_2y+\tilde{a}_2) e^{-\lambda y} + p_2 y^2 + p_1y + p_0$

\end{enumerate}

\item[III.] $\displaystyle \mathcal{a}(y) =  (a_2 y^2 +a_1y + a_0) e^{\lambda y} + (\tilde{a}_2 y^2 +\tilde{a}_1y + \tilde{a}_0) e^{-\lambda y} + a_3$

\item[IV.] $\displaystyle \mathcal{a}(y) =  \sum_{j=0}^6 a_j y^j$
\end{enumerate}

\noindent  If $\alpha_1 \neq 0$, then from \eqref{alpha3 b'} we see that $\mathcal{b}$ has exactly the same form as $\mathcal{a}$. Assume now $\alpha_1 = 0$, if $k_2 = 0$ we find from \eqref{b'''} that $\mathcal{b}(y) = \mathcal{a}'(y) + p_2 (y^2-1)$, if $k_2 \neq 0$, then $\mathcal{b}$ is of the same form as $\mathcal{a}$ only it might contain two extra exponentials $e^{\pm \sqrt{-24k_2} y}$, if those differ from all the exponentials appearing in $\mathcal{a}$, otherwise if one of them coincides, say with $e^{\lambda y}$, then the polynomial multiplying the latter gets one degree higher. Finally, $\mathcal{c}$ is of the same form as $\mathcal{b}$.

\subsection{Reduction}

Our goal is to reduce the cases I--IV and conclude that $\mathcal{a}(y)$ can have one of the two forms $a_1 e^{\lambda y} + a_2 e^{-\lambda y} + a_0$ or $\sum_{j=0}^6 a_j y^j$. Moreover, $\mathcal{b}$ and $\mathcal{c}$ must have exactly the same form as $\mathcal{a}$, but possibly with different constants $b_j, c_j$ instead of $a_j$. This reduction will be achieved by the three lemmas below.

\begin{lemma} \label{LEMMA polynomial is const}
If the functions $\mathcal{a}, \mathcal{b}, \mathcal{c}$ contain an exponential term, the polynomial multiplying it must be constant.
\end{lemma}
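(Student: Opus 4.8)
The plan is to fix a single exponential $e^{\lambda y}$ (with $\lambda\neq 0$, since the lemma concerns the exponential, not the polynomial, part of the forms I--IV) occurring in $\mathcal{a},\mathcal{b},\mathcal{c}$, and to isolate the portion of \eqref{R3} that it generates. Write $P_a,P_b,P_c$ for the polynomials multiplying $e^{\lambda y}$ in $\mathcal{a},\mathcal{b},\mathcal{c}$. A shift $y\mapsto y+z$ preserves the exponential type (it only multiplies $P_a(y+z)$ by $e^{\lambda z}$), so by linear independence of the distinct exponentials $e^{\lambda_j y}$ the $e^{\lambda y}$-carrying terms decouple from everything else. Dividing out $e^{\lambda y}$, this gives a polynomial identity in $y$ whose $z$-dependent coefficients are built from $k,k',k''$ and $e^{\lambda z}$:
\begin{multline*}
\bigl[e^{\lambda z}P_a(y+z)-P_a(y)\bigr]k''(z)
+\bigl[2P_a'(y)+2\lambda P_a(y)+e^{\lambda z}P_b(y+z)-P_b(y)\bigr]k'(z)\\
+\bigl[e^{\lambda z}P_c(y+z)-P_c(y)+P_b'(y)+\lambda P_b(y)-P_a''(y)-2\lambda P_a'(y)-\lambda^2 P_a(y)\bigr]k(z)=0.
\end{multline*}

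First I would note that this identity is preserved under $\partial_y$, which simply replaces $(P_a,P_b,P_c)$ by $(P_a',P_b',P_c')$ (each bracket transforms into the same bracket with all three polynomials differentiated). Hence, if $d\ge 1$ is the maximal degree occurring among $P_a,P_b,P_c$, differentiating the identity $d-1$ times reduces matters to \emph{linear} polynomials $P_a=a_1y+a_0$, $P_b=b_1y+b_0$, $P_c=c_1y+c_0$, with at least one of the leading coefficients $a_1,b_1,c_1$ nonzero (the one coming from a slot of maximal degree $d$). It therefore suffices to show that any linear solution forces $a_1=b_1=c_1=0$. Next I would extract the coefficients of $y^1$ and $y^0$. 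The $y^1$-coefficient is the ODE
\[
(e^{\lambda z}-1)a_1\,k''+\bigl[2\lambda a_1+(e^{\lambda z}-1)b_1\bigr]k'+\bigl[(e^{\lambda z}-1)c_1+\lambda b_1-\lambda^2 a_1\bigr]k=0,
\]
call it $R_1=0$, and the $y^0$-coefficient is a second ODE $R_0+E=0$ of the same type but with an extra $z$-multiplied contribution $E$ coming from the shift. The degenerate subcase $a_1=b_1=0$, $c_1\neq0$ is immediate: $R_1=0$ collapses to $(e^{\lambda z}-1)c_1\,k=0$, forcing $k\equiv0$, which is excluded. In every remaining subcase $R_1=0$ is a genuine first- or second-order ODE which, together with $R_0+E=0$, over-determines the single function $k$.

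The decisive step is to run this over-determination against the prescribed pole data. Expanding $R_1=0$ in powers of $z$ about the simple pole (normalized so that $k_0=1$, $k_1=0$, with $zk$ analytic), the two most singular coefficients, at $z^{-2}$ and $z^{-1}$, vanish identically --- this merely reflects the indicial exponents $-1,0$ of the regular singular point at $z=0$, consistent with a simple pole --- so the first real constraints appear at order $z^{0}$ and beyond, linking $a_1,b_1,c_1,\lambda$ to the Taylor coefficients of $zk$. Eliminating $k''$ between $R_1=0$ and $R_0+E=0$ produces a lower-order relation; a direct computation shows its leading $O(z)$ term cancels, so the reduced relation is essentially first order and pins $k$ down near $z=0$. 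Feeding this back into $R_1=0$ then forces $k$ to be of exponential--polynomial (hence entire) type, contradicting the simple pole --- unless the parameters degenerate to $a_1=b_1=c_1=0$, which is exactly the desired conclusion.

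The hard part will be the bookkeeping in this last step: the free lower coefficients $a_0,b_0,c_0$ and the unknown Taylor coefficients of $zk$ couple the two ODEs, so the contradiction is not one line but requires either a clean elimination of $k''$ (and then $k'$) or tracking the $z$-expansion to enough orders. One must also dispatch the remaining degenerate subcase $a_1=0$, $b_1\neq0$, where $R_1=0$ is first order and its solution is the explicit $k\sim e^{cz}/\sinh(\tfrac{\lambda}{2}z)$; here the second equation $R_0+E=0$ must be invoked to kill the linear term. I expect organizing this elimination so that the exponential/pole dichotomy is forced uniformly across subcases to be the main obstacle.
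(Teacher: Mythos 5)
Your reduction to the linear case is correct and is genuinely slicker than the paper's route: the observation that applying $\partial_y$ to the decoupled $e^{\lambda y}$-identity simply replaces $(P_a,P_b,P_c)$ by $(P_a',P_b',P_c')$ lets you assume at once that all three polynomials are linear with some leading coefficient nonzero, whereas the paper works with the quadratic and cubic polynomials directly and must run its top-degree argument three times in succession (first $b_3=c_3=0$, then $a_2=0$, then the same one degree down). Your $y^1$-equation $R_1=0$ agrees with the paper's, and the degenerate subcase $a_1=b_1=0$, $c_1\neq 0$ is correctly dispatched.

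However, the decisive step --- the contradiction from the pair $R_1=0$, $R_0+E=0$ when $a_1\neq 0$ (or $a_1=0$, $b_1\neq0$) --- is where the entire content of the lemma lives, and you have not carried it out; moreover the mechanism you describe is not the one that works. The general solution of $R_1=0$ with $a_1\neq0$ is
\[
k(z)=\frac{e^{\left(\lambda-\frac{b_1}{2a_1}\right)z}}{e^{\lambda z}-1}\left(\alpha_1 e^{\mu z}+\alpha_2 e^{-\mu z}\right),\qquad \mu^2=\tfrac{b_1^2}{4a_1^2}-\tfrac{c_1}{a_1},
\]
which generically \emph{does} have a simple pole at $0$ (whenever $\alpha_1+\alpha_2\neq0$), so the over-determination does not ``force $k$ to be of exponential--polynomial (hence entire) type'': that heuristic fails at face value. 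What the paper actually does is substitute this closed form into the $y^0$-equation, clear denominators by multiplying by $e^{\mu z}(e^{\lambda z}-1)^3$, and invoke \emph{global} linear independence of the resulting functions $e^{qz}$, $ze^{qz}$, $z^2e^{qz}$; in the generic branch this yields $\alpha_1=\alpha_2=0$, i.e.\ $k\equiv0$, and only in the exceptional branches where exponents collide ($\lambda=\pm2\mu$, or $\mu=0$) does the contradiction come from $k$ losing its pole. Your proposed local Taylor expansion of the two ODEs at $z=0$ produces one algebraic relation per order with the infinitely many unknown coefficients of $zk$ entering at every order, and you give no argument that this process closes up; the global linear-independence step is the missing ingredient. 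The same gap recurs in your subcase $a_1=0$, $b_1\neq0$, where you note the second equation ``must be invoked'' but do not invoke it.
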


\begin{proof}
See the appendix.
\end{proof}

\begin{lemma}
The functions $\mathcal{a}, \mathcal{b}, \mathcal{c}$ cannot contain two exponentials $e^{\lambda y}, e^{\mu y}$ with $\mu \neq \pm \lambda$.
\end{lemma}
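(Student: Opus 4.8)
The plan is to convert the functional equation \eqref{R3} into a family of ordinary differential equations for the single function $k$, one for each exponential frequency occurring in $\mathcal{a},\mathcal{b},\mathcal{c}$, and then show that two genuinely different frequencies over‑determine $k$. By Lemma~\ref{LEMMA polynomial is const} every exponential in $\mathcal{a},\mathcal{b},\mathcal{c}$ carries a constant coefficient, so for a nonzero exponent $\lambda$ I write $A_\lambda,B_\lambda,C_\lambda$ for the coefficients of $e^{\lambda y}$ in $\mathcal{a},\mathcal{b},\mathcal{c}$. Substituting these forms into \eqref{R3} and using linear independence of the functions $e^{\lambda y}$ (distinct $\lambda$) together with the monomials $y^j$, the coefficient of each $e^{\lambda y}$, viewed as a function of $z$, must vanish. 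A direct computation gives the second–order linear ODE
\[ \mathcal{O}_\lambda[k] := A_\lambda(e^{\lambda z}-1)\,k'' + \big[B_\lambda(e^{\lambda z}-1)+2A_\lambda\lambda\big]k' + \big[C_\lambda(e^{\lambda z}-1)+B_\lambda\lambda-A_\lambda\lambda^2\big]k = 0, \]
whose leading coefficient vanishes to first order at $z=0$ with indicial roots $0$ and $-1$, consistently with the simple pole of $k$. I denote the coefficients of $k'',k',k$ here by $P_\lambda,Q_\lambda,R_\lambda$.

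Suppose, for contradiction, that two exponents $\lambda,\mu$ with $\mu\neq\pm\lambda$ are present, so that $k$ solves both $\mathcal{O}_\lambda[k]=0$ and $\mathcal{O}_\mu[k]=0$. First I dispose of degenerate frequencies: if some present $\lambda$ has $A_\lambda=B_\lambda=0$ (hence $C_\lambda\neq0$), then $\mathcal{O}_\lambda[k]=C_\lambda(e^{\lambda z}-1)k=0$ forces $k\equiv0$, a contradiction; thus every present frequency has $A_\lambda\neq0$ or $B_\lambda\neq0$. Eliminating $k''$ between the two equations produces the first–order relation $Sk'+Tk=0$ with $S=P_\mu Q_\lambda-P_\lambda Q_\mu$ and $T=P_\mu R_\lambda-P_\lambda R_\mu$, entire of exponential type. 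A short computation gives
\[ S = (A_\mu B_\lambda - A_\lambda B_\mu)(e^{\lambda z}-1)(e^{\mu z}-1) + 2A_\lambda A_\mu\big[\lambda(e^{\mu z}-1)-\mu(e^{\lambda z}-1)\big], \]
and reading off the coefficients of $e^{(\lambda+\mu)z},e^{\lambda z},e^{\mu z}$ shows that $S\equiv0$ is possible only if $A_\lambda A_\mu=0$. Hence whenever both leading coefficients are nonzero the eliminated relation is genuinely first order.

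In that case $k'/k=-T/S$ determines $k$ up to a constant; substituting $k'=-(T/S)k$ and the induced expression for $k''$ back into $\mathcal{O}_\lambda[k]=0$ and clearing denominators yields the exponential–polynomial identity
\[ P_\lambda\big(TS' - ST' + T^2\big) - Q_\lambda\,TS + R_\lambda\,S^2 = 0. \]
Writing $S_{\lambda\mu}=A_\mu B_\lambda-A_\lambda B_\mu$ and $T_{\lambda\mu}=A_\mu C_\lambda-A_\lambda C_\mu$ for the leading coefficients of $S,T$, I would compare the coefficient of the top bidegree monomial $e^{(3\lambda+2\mu)z}$, which reduces to $A_\lambda T_{\lambda\mu}^2 - B_\lambda T_{\lambda\mu}S_{\lambda\mu} + C_\lambda S_{\lambda\mu}^2 = 0$; interchanging the roles of $\lambda$ and $\mu$ (using $\mathcal{O}_\mu$ to eliminate $k''$) gives the companion equation with $A_\mu,B_\mu,C_\mu$. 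Both say that the triples $(A_\lambda,B_\lambda,C_\lambda)$ and $(A_\mu,B_\mu,C_\mu)$ annihilate $(T_{\lambda\mu}^2,-T_{\lambda\mu}S_{\lambda\mu},S_{\lambda\mu}^2)$, which together with the per–frequency relations forced by \eqref{c in terms of b singular comm*}, \eqref{b'''} and \eqref{alpha3 b'} (holding with the same constants $k_2,k_3,\alpha_1,\alpha_2$ for both $\lambda$ and $\mu$) I expect to be compatible only when $\mu=\pm\lambda$. The branches $A_\lambda=0,\,B_\lambda\neq0$ are handled identically: then $\mathcal{O}_\lambda$ itself is first order and already pins down $k$, and substitution into the other frequency's equation yields an analogous identity.

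The main obstacle is exactly this last bookkeeping. Two points require care. First, the leading coefficients $S_{\lambda\mu},T_{\lambda\mu}$ may themselves vanish (proportional frequency data), in which case the top–monomial equation degenerates to $0=0$ and one must descend to the next corner of the Newton polygon of exponents, where several products contribute. Second, when $\lambda/\mu\in\mathbb{Q}$ the exponent lattice collapses and distinct monomials $e^{\lambda z},e^{\mu z}$ become algebraically dependent, so the comparison must be organised by a genuine top degree (grading by powers of $e^{\omega z}$, with $\omega$ generating the lattice) rather than by a single frequency. The explicit form of $S$ computed above, which isolates precisely when $S\equiv0$, is what guarantees that every admissible configuration is channelled into a case where a nonvacuous identity in the exponentials is available; verifying that none of these identities can hold for $\mu\neq\pm\lambda$ is the crux of the argument.
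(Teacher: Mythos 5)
Your setup is sound and matches the paper's starting point: each exponential frequency $\lambda$ present in $\mathcal{a},\mathcal{b},\mathcal{c}$ yields, by linear independence in $y$, exactly the second order ODE you call $\mathcal{O}_\lambda[k]=0$ (this is \eqref{singular k a0 eqn} before the change of variables). But from there your argument has a genuine gap: the entire content of the lemma is the claim that the two resulting constraints on $k$ are incompatible when $\mu\neq\pm\lambda$, and that is precisely the step you do not carry out. You eliminate $k''$, obtain an exponential--polynomial identity, and then write that you ``expect'' the top-coefficient equations to be compatible only when $\mu=\pm\lambda$, while yourself listing the two obstructions that block the coefficient comparison: (i) the leading coefficients $S_{\lambda\mu},T_{\lambda\mu}$ may vanish, so the ``top bidegree'' equation can be vacuous, and (ii) when $\lambda/\mu\in\mathbb{Q}$ the monomials $e^{(3\lambda+2\mu)z}$, $e^{\lambda z}$, $e^{\mu z}$, etc.\ need not be linearly independent, so reading off a single coefficient is not legitimate. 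Neither degeneracy is resolved, and the commensurable case is not an exotic corner: the interesting kernels in the theorem have purely imaginary $\lambda$ with rationally related frequencies. As written, the proposal is a plan for a proof, not a proof.

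The paper avoids this elimination entirely by \emph{solving} each frequency's ODE in closed form: the substitution $u(z)=k(z)\bigl(e^{\lambda z}-1\bigr)$ turns $\mathcal{O}_\lambda[k]=0$ into a constant-coefficient equation, giving the explicit formula \eqref{singular k formula from exp term}
\begin{equation*}
k(z)=\frac{e^{(\nu+\lambda)z}}{e^{\lambda z}-1}\bigl(\alpha_1\sinh(\rho z)+\alpha_2\cosh(\rho z)\bigr)
\end{equation*}
(or its degenerate linear version). It then shows that this formula determines the set $\{\lambda,-\lambda\}$ from $k$ alone: the residue at $z=0$ is $\alpha_2/\lambda$, and $\alpha_2$ is recovered up to sign from the asymptotics of $k$ at $\pm\infty$ (split into cases according to the real parts of $\lambda$ and $\rho$). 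Since the same $k$ cannot determine two different sets $\{\lambda,-\lambda\}\neq\{\mu,-\mu\}$, a second frequency is impossible; the first-order case $A_\mu=0$, $B_\mu\neq0$ is handled by the same formula with $\rho=0$ type data. If you want to salvage your elimination route you would need to actually classify the solutions of the identity $P_\lambda(TS'-ST'+T^2)-Q_\lambda TS+R_\lambda S^2=0$ over the full exponent lattice, including the commensurable and degenerate branches; it is far simpler to integrate the ODE first, as the paper does, and let the explicit formula do the work.
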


\begin{proof}
Consider a typical exponential term in $\mathcal{a}, \mathcal{b}$ and $\mathcal{c}$ (due to Lemma~\ref{LEMMA polynomial is const} the polynomial multiplying it must be a constant), namely 

\begin{equation*}
\mathcal{a} \leftrightarrow a_0 e^{\lambda y}, \qquad
\mathcal{b} \leftrightarrow  b_0 e^{\lambda y}, \qquad
\mathcal{c} \leftrightarrow  c_0 e^{\lambda y} ,
\end{equation*}

\noindent where $a_0 \neq 0$. The equation coming from $e^{\lambda y}$ after substituting these forms into \eqref{R3} is (obtained analogously to the first equation of \eqref{singular k eqs y^2 and y} in the appendix)

\begin{equation*}
a_0 (e^{\lambda z} - 1) k'' + \left[2a_0\lambda + b_0 (e^{\lambda z} - 1) \right] k' + \left[ b_0 \lambda - a_0 \lambda^2 + c_0 (e^{\lambda z} - 1) \right] k = 0 .
\end{equation*}

\noindent After changing the variables $u(z) = k(z) (e^{\lambda z} - 1)$ it becomes 

\begin{equation} \label{singular k a0 eqn}
a_0 u'' + (b_0 -2a_0 \lambda) u' + (a_0 \lambda^2 - b_0 \lambda +c_0) u = 0 .
\end{equation}

\noindent Then, with $\nu = -\frac{b_0}{2a_0}$ and $\alpha_1, \alpha_2 \in \CC$ we have

\begin{equation} \label{singular k formula from exp term}
k(z) = \frac{e^{(\nu+\lambda) z} }{e^{\lambda z} - 1}\cdot
\begin{cases}
\alpha_1 z + \alpha_2, & \rho : = \sqrt{\tfrac{b_0^2}{4a_0^2} - \tfrac{c_0}{a_0}} = 0
\\
\alpha_1 \sinh(\rho z) + \alpha_2 \cosh(\rho z),  \qquad & \rho \neq 0
\end{cases}
\end{equation}

\noindent We claim that the set $\{\lambda, -\lambda\}$ is determined by the functions given above. In other words, up to the sign, $\lambda$ is determined by $k$. This will prove that in $\mathcal{a}(y)$, there cannot be another exponential $e^{\mu y}$ with $\mu \neq \pm \lambda$, because the equation coming from $e^{\mu y}$ will lead to a formula for $k$ incompatible with \eqref{singular k formula from exp term}. Computing the residue of $k$ at the pole $z=0$ we find $k_0 = \frac{\alpha_2}{\lambda}$, hence it is enough to show that $\alpha_2$ is determined up to the sign. Let $k$ be given by the second formula of \eqref{singular k formula from exp term} (in the other case the same argument will apply), write $\rho = \rho_1 + i\rho_2$ and $\lambda = \lambda_1 + i \lambda_2$. 

Let $\lambda_1 \neq 0$ and $\rho_1 \neq 0$, then w.l.o.g. we may assume $\rho_1 > 0$, otherwise negate $(\alpha_1 , \rho)$. If $\lambda_1 > 0$ we find

\begin{equation*}
k(z) \sim
\begin{cases}
\tfrac{1}{2} (\alpha_1 + \alpha_2) e^{(\nu + \rho)z}, & z \to +\infty,
\\
\tfrac{1}{2} (\alpha_1 - \alpha_2) e^{(\nu + \lambda - \rho)z}, & z \to -\infty.
\end{cases}
\end{equation*}

\noindent Therefore, $\alpha_2$ is equal to the difference of coefficients in the asymptotics of $k$ at plus and minus infinities. But when $\lambda_1 < 0$, by writing down the asymptotics, one can see that the same difference gives $-\alpha_2$.

Let now $\lambda_1 \neq 0$ and $\mu_1 = 0$, we find $k(z) \sim e^{\nu z} (i\alpha_1 \sin (\rho_2 z) + \alpha_2 \cos(\rho_2 z))$ as $z \to +\infty$ if $\lambda_1 > 0$, and when $\lambda_1 < 0$ the same formula holds, but the RHS multiplied by $- e^{\lambda z}$. Again we see that $\alpha_2$ is determined up to the sign.

Let $\lambda_1 = 0$ and $\rho_2 \neq 0$, we may assume $\rho_2 > 0$, otherwise negate $(\alpha_1, \rho)$, then 

\begin{equation*}
k(iz) \sim 
\begin{cases}
  \tfrac{1}{2} (\alpha_1 - \alpha_2) e^{i(\nu + \lambda - \rho)z}, & z \to +\infty,
\\
\tfrac{1}{2} (\alpha_1 + \alpha_2) e^{i(\nu + \rho)z}, & z \to -\infty.
\end{cases}
\end{equation*}

\noindent Finally, the case $\lambda_1 = \rho_2 = 0$ can be treated similarly.

Remains to note that $\mathcal{b}, \mathcal{c}$ cannot have an exponential $e^{\mu y}$ with $\mu \neq \pm \lambda$ either (we assume $a_0 e^{\lambda y}$ appears in $\mathcal{a}$). Indeed, if $\tilde{b}_0 e^{\mu y}$ and $\tilde{c}_0 e^{\mu y}$ appear in $\mathcal{b}$ and $\mathcal{c}$ respectively, then for $k$ we obtain an equation like \eqref{singular k a0 eqn}, but with $a_0 = 0$ and $b_0, c_0$ replaced with $\tilde{b}_0, \tilde{c}_0$, hence $k(z)=e^{(\mu + \tilde{\nu})z} / (e^{\mu z} - 1)$ with $\tilde{\nu} = - \tilde{c}_0 / \tilde{b}_0$. But this is of the same form as \eqref{singular k formula from exp term}, hence as we showed $\mu$ is determined up to its sign. In other words the two formulas for $k$ are compatible only if $\mu = \pm \lambda$.

\end{proof}

\begin{lemma}
The functions $\mathcal{a}, \mathcal{b}, \mathcal{c}$ cannot contain an exponential and a polynomial at the same time.
\end{lemma}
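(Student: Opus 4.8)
The plan is to combine the explicit formula \eqref{singular k formula from exp term} that an exponential term forces on $k$ with an analytic rigidity principle: a meromorphic function with infinitely many poles cannot satisfy a nontrivial linear ODE with polynomial coefficients. By the two preceding lemmas, once an exponential is present each of $\mathcal{a},\mathcal{b},\mathcal{c}$ splits as a constant multiple of $e^{\lambda y}$, a constant multiple of $e^{-\lambda y}$ (with one common $\lambda\neq 0$), and a polynomial; write $\mathcal{a}=\mathcal{a}_e+\mathcal{a}_p$, $\mathcal{b}=\mathcal{b}_e+\mathcal{b}_p$, $\mathcal{c}=\mathcal{c}_e+\mathcal{c}_p$ for these exponential and polynomial parts. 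Since $e^{\lambda y},e^{-\lambda y}$ and the monomials $y^j$ are linearly independent as functions of $y$ (with coefficients depending on $z$), and since $f(y)\mapsto f(y+z)$ preserves this splitting, relation \eqref{R3} separates into an exponential part and a polynomial part. The exponential part is exactly the equation treated in the previous lemma, and therefore forces $k$ to be given by \eqref{singular k formula from exp term}. Because $\lambda\neq 0$, this $k$ is meromorphic on $\CC$ with infinitely many poles: its poles lie among the zeros $z_n=2\pi i n/\lambda$ of $e^{\lambda z}-1$, and the residues there form a nonzero exponential-polynomial sequence in $n$ (nonzero because the residue of $k$ at $z=0$ equals $\alpha_2/\lambda$, which is nonzero in case $(ii)$), so at most a proper subprogression of the $z_n$ can be cancelled by the entire numerator and infinitely many poles remain.

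The polynomial part of \eqref{R3} reads $A\,k''+B\,k'+C\,k=0$, where $A=\mathcal{a}_p(y+z)-\mathcal{a}_p(y)$, $B=2\mathcal{a}_p'(y)+\mathcal{b}_p(y+z)-\mathcal{b}_p(y)$ and $C=\mathcal{c}_p(y+z)-\mathcal{c}_p(y)+\mathcal{b}_p'(y)-\mathcal{a}_p''(y)$ are polynomials in both $y$ and $z$. Fixing $y$, this is a linear second order ODE for $k$ whose coefficients are polynomials in $z$. If these coefficients are not all identically zero, the equation has only finitely many singular points (the zeros of its leading coefficient), so every solution is analytic off a finite set; if the top coefficient vanishes identically the equation drops to first or zeroth order and its solutions are elementary, again with only finitely many singularities. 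Either way this is incompatible with $k$ having infinitely many poles. Hence for every $y$ the coefficients must vanish identically in $z$, that is $A\equiv B\equiv C\equiv 0$.

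Reading off these identities finishes the proof: $A\equiv 0$ gives $\mathcal{a}_p(y+z)=\mathcal{a}_p(y)$, so $\mathcal{a}_p$ is constant; feeding $\mathcal{a}_p'\equiv 0$ into $B\equiv 0$ gives $\mathcal{b}_p(y+z)=\mathcal{b}_p(y)$, so $\mathcal{b}_p$ is constant; and then $C\equiv 0$ gives $\mathcal{c}_p(y+z)=\mathcal{c}_p(y)$, so $\mathcal{c}_p$ is constant. Thus no nonconstant polynomial can accompany an exponential, which is the assertion of the lemma.

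I expect the main obstacle to be the analytic rigidity step, namely verifying carefully that the $k$ produced by \eqref{singular k formula from exp term} really does have infinitely many poles — this requires ruling out the degenerate parameter choices in which the numerator cancels a full arithmetic progression of zeros of $e^{\lambda z}-1$, which one checks forces $\alpha_2=0$ and hence makes $k$ entire, contradicting the simple pole at $0$ — and confirming that a meromorphic function with infinitely many poles cannot solve a nontrivial polynomial-coefficient linear ODE. The separation of \eqref{R3} into exponential and polynomial parts and the final reading-off of $A\equiv B\equiv C\equiv 0$ are routine by comparison.
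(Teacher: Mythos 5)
Your proof is correct, but it takes a genuinely different route from the paper's. The paper argues degree by degree in $y$: after substituting, the coefficient of the highest surviving power of $y$ yields the ODE \eqref{esim}, whose explicit solutions \eqref{singular k pol eqn} have the form $e^{\omega z}\cdot(\text{entire})/z$; cross-multiplying against \eqref{singular k formula from exp term} and using linear independence of the functions $z^{j}e^{\gamma z}$ shows the two closed forms are incompatible unless $a_4=b_4=c_4=0$, and the argument then descends through the lower powers of $y$. You instead treat the whole polynomial part of \eqref{R3} at once and invoke a soft complex-analytic rigidity: the exponential part forces $k$ to be (the restriction of) a meromorphic function with infinitely many poles among $2\pi i n/\lambda$, whereas a nontrivial second-order linear ODE whose coefficients are polynomials in $z$ confines all singularities of its solutions to the finitely many zeros of its leading coefficient; hence the polynomial part must vanish identically, which immediately makes $\mathcal{a}_p,\mathcal{b}_p,\mathcal{c}_p$ constant. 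Your version is shorter and avoids the case-by-case descent, but it shifts the burden onto two analytic points the paper's computation never needs: (i) the identity \eqref{R3} holds a priori only for real $z\in[-2,2]$, so the identity and $k$ must first be continued to the complex plane before the poles at $2\pi i n/\lambda$ become visible --- legitimate here because the exponential part already forces $k$ to be real-analytic and meromorphically continuable, but it should be stated; and (ii) the verification, which you correctly flag and resolve, that the numerator in \eqref{singular k formula from exp term} cannot cancel all but finitely many zeros of $e^{\lambda z}-1$ without also vanishing at $z=0$ and destroying the simple pole. Both routes are sound; the paper's is more elementary and self-contained, while yours is more conceptual and would extend to polynomial parts of arbitrary degree with no extra work.
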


\begin{proof}
Let $a_5 e^{\lambda y} + \sum_{j=0}^4 a_j y^j$, with $a_5 \neq 0$ be part of $\mathcal{a}$. The functions $\mathcal{b}, \mathcal{c}$ also have such parts, but with possibly different constants $b_j, c_j$. From the above lemma we know that $k$ is given by \eqref{singular k formula from exp term} (with $a_0$ replaced by $a_5$). One can check that once these expressions for $\mathcal{a}, \mathcal{b}$ and $\mathcal{c}$ are substituted into \eqref{R3}, the factors $y^4$ get canceled and the equation corresponding to $y^3$ reads

\begin{equation} \label{esim}
a_4 z k'' + (b_4 z + 2a_4) k' + (c_4z +b_4)k = 0 .
\end{equation}  

\noindent Let us first show that $a_4 = 0$. For the sake of contradiction assume $a_4 \neq 0$, then the solution, with $\omega = -\frac{b_4}{2a_4}$, is given by

\begin{equation} \label{singular k pol eqn}
k(z) = \frac{e^{\omega z}}{z} \cdot
\begin{cases}
\beta_1z + \beta_2,  &\eta:=\sqrt{\tfrac{b_4^2}{4a_4^2} - \tfrac{c_4}{a_4}} = 0,
\\
\beta_1 \sinh(\eta z) + \beta_2 \cosh(\eta z), \qquad &\eta \neq 0.
\end{cases}
\end{equation}

\noindent We note that this is not compatible with \eqref{singular k formula from exp term}, because cross multiplying the two formulas we get (with $f,g$ being the second multiplying factors from \eqref{singular k formula from exp term} and \eqref{singular k pol eqn}, respectively)

\begin{equation*}
z e^{(\nu + \lambda) z} f(z) = e^{\omega z} (e^{\lambda z} -1) g(z) .
\end{equation*} 

\noindent If $g(z)=\beta_1 \sinh(\eta z) + \beta_2 \cosh(\eta z)$, we use the linear independence of $z e^{\gamma z}$ and $e^{\tilde{\gamma} z}$ to conclude that $k=0$. Let $g(z) = \beta_1z + \beta_2$, if $f$ is given by the first formula the above relation reads

\begin{equation*}
\alpha_1 z^2 e^{(\nu + \lambda)z} + \alpha_2 z e^{(\nu + \lambda)z} + \beta_1 z e^{\omega z} - \beta_1 z e^{(\omega + \lambda)z} = \beta_2 e^{(\omega + \lambda)z} - \beta_2 e^{\omega z} .
\end{equation*}

\noindent Because $\lambda \neq 0$, the exponentials on RHS are linearly independent, hence we conclude that $\beta_2 = 0$, which contradicts to $k$ having a pole at zero. When $f$ is given by the second formula the same argument applies. 

Thus, $a_4 = 0$, if $b_4 \neq 0$ we find $k(z) = e^{\omega z} / z$, but now $\omega = - c_4 / b_4$. This has the same form as \eqref{singular k pol eqn}, hence again it is incompatible with \eqref{singular k formula from exp term}. Therefore, $b_4 = 0$ and obviously $c_4 = 0$. With this information, the equation corresponding to $y^2$ is as \eqref{esim} with all subscripts changed from 4 to 3. Hence, the same procedure works and eventually we conclude $a_j = b_j = c_j = 0$ for $j=1,...,4$. 

\end{proof}

\subsection{Finding $k$}

The analysis of the previous subsection shows that we have two possible forms ($\lambda \neq 0 $)

\begin{equation*}
\text{I.} \ \mathcal{a}(y) = a_1 e^{\lambda y} + a_2 e^{-\lambda y} + a_0,
\qquad \qquad 
\text{II.} \ \mathcal{a}(y) =\sum_{j=0}^6 a_j y^j.
\end{equation*}

\noindent Moreover we also showed that in each case $\mathcal{b},\mathcal{c}$ are exactly of the same form as $\mathcal{a}$, only with possibly different constants $b_j, c_j$ instead of $a_j$.

\subsubsection{Case I}

\noindent Assume case I holds, substituting the expressions for $\mathcal{a}, \mathcal{b}, \mathcal{c}$ into \eqref{R3} we find that a linear combination of $e^{\pm \lambda y}$ is zero, hence the coefficient of each exponential must vanish. Like this we obtain two ODEs for $k$. More precisely, 

\begin{equation*}
\begin{split}
a_1 (e^{\lambda z} - 1) k'' + \left[2a_1\lambda + b_1 (e^{\lambda z} - 1) \right] k' + \left[ b_1 \lambda - a_1 \lambda^2 + c_1 (e^{\lambda z} - 1) \right] k = 0,
\\
a_2 (e^{-\lambda z} - 1) k'' + \left[-2a_2\lambda + b_2 (e^{-\lambda z} - 1) \right] k' + \left[ -b_2 \lambda - a_2 \lambda^2 + c_2 (e^{-\lambda z} - 1) \right] k = 0 .
\end{split}
\end{equation*}

\noindent Note that the second equation is obtained from the first one if we negate $\lambda$ and change the subscripts of $a_1,b_1,c_1$ from 1 to 2. Consider the following cases:

\vspace{.1in}

\noindent \textbf{Case I.1.} $a_1=a_2=0$, then $\mathcal{a}\equiv 0$ and from the boundary conditions $\mathcal{b}(\pm 1) = 0$. W.l.o.g. let $b_1 \neq 0$ solving the first ODE for $k$ we get, with $\nu = -\frac{c_1}{b_1}$

\begin{equation*}
k(z) = \frac{e^{(\nu + \lambda)z}}{e^{\lambda z} -1} = \frac{e^{\left(\nu + \frac{\lambda}{2}\right)z}}{2 \sinh \left( \frac{\lambda}{2}z \right)} .
\end{equation*}

\noindent For this to satisfy also the second ODE we need $c_2 = -(\nu + \lambda) b_2$. One can check that for $k$ to be smooth in $[-2,2] \backslash \{0\}$, we cannot have $\lambda = \pi i n$, therefore the boundary conditions on $\mathcal{b}$ imply $b_1 = b_2$ and so $\mathcal{b}(y) = \cosh (\lambda y) - \cosh \lambda$. Now if $\lambda \in i\RR$, for the same reason we require $|\lambda|<\pi$. From the relation \eqref{c in terms of b singular comm*} we see that $\mathcal{c}(y) = \frac{1}{2} \mathcal{b}'(y)$. After ignoring the exponential in the numerator of the formula for $k$ (see Remark~\ref{REM multiplier}) we obtain

\begin{equation} \label{item 2 alpha 0}
k(z) =\frac{1}{\sinh \left( \frac{\lambda}{2}z \right)}, \qquad \qquad
\begin{cases}
\mathcal{a}(y) = 0,
\\
\mathcal{b}(y) = \cosh (\lambda y) - \cosh \lambda,
\\
\mathcal{c}(y) = \tfrac{1}{2} \mathcal{b}'(y).
\end{cases}
\end{equation}

\vspace{.1in}

\noindent \textbf{Case I.2.} If $a_1 \neq 0$ (the case $a_2 \neq 0$ can be treated analogously) by rescaling let us take $a_1 = \frac{1}{2}$, then as the formula \eqref{singular k formula from exp term} was obtained we get, by w.l.o.g. choosing $\nu = - \lambda / 2$, or equivalently $b_1 = \lambda a_1$ (see Remark~\ref{REM multiplier}) that

\begin{equation*}
k(z) = \frac{1}{\sinh \left( \frac{\lambda}{2}z \right)}\cdot
\begin{cases}
\alpha_1 z + \alpha_2, & \mu : = \sqrt{b_1^2 - 2c_1} = 0,
\\
\alpha_1 \sinh(\mu z) + \alpha_2 \cosh(\mu z),  \qquad & \mu \neq 0.
\end{cases}
\end{equation*}

$\bullet$ Let $k$ be given by the first formula. It is easy to check that $\lambda = \pi i n$, with $n \in \ZZ$ contradicts to the smoothness assumption on $k$, so the  boundary conditions imply that $a_1= a_2$ and therefore $\mathcal{a}(y) = \cosh(\lambda y) - \cosh \lambda$. Because of the same reason, when $\lambda \in i\RR$ we need a further restriction $|\lambda| < \pi$. The boundary conditions $\mathcal{b}(\pm 1) = \mathcal{a}'(\pm 1)$ then imply

\begin{equation*}
b_2 = -\tfrac{\lambda}{2}, \quad b_0 = 0 \quad \Rightarrow \quad \mathcal{b}(y) = \tfrac{\lambda}{2} e^{\lambda y} - \tfrac{\lambda}{2} e^{-\lambda y} = \mathcal{a}'(y) .
\end{equation*}

\noindent Now, $k$ has to satisfy also the second ODE, so we substitute the expression for $k$ there and simplify the result to find

\begin{equation*}
e^{-\frac{\lambda}{2}z} (\alpha_1 z + \alpha_2) \left( c_2 - \tfrac{\lambda^2}{8} \right) = 0 ,
\end{equation*}

\noindent which clearly implies $c_2 = \tfrac{\lambda^2}{8}$. But because this was the case $\mu = 0$ we have $c_1 = \frac{b_1^2}{2} = \tfrac{\lambda^2}{8}$ and therefore we conclude that $\mathcal{c}(y) = \tfrac{\lambda^2}{2} \mathcal{a}(y)$. Thus, we proved \eqref{k general} and \eqref{a,b,c general} of Theorem~\ref{THM commutation} in the limiting case $\mu = 0$. Moreover, when $\alpha_1 = 0$ we obtain the same kernel as in \eqref{item 2 alpha 0}, hence we can take a linear combination of the differential operator of this case and the one in \eqref{item 2 alpha 0} and $K$ will still commute with it. This proves item 2 of Theorem~\ref{THM commutation}. 

\vspace{.1in}  

$\bullet$ Let $k$ be given by the second formula. When $\lambda \in i\RR$ there are further restrictions for parameters. Let us analyze them. Firstly, if $\lambda \in i\RR$ with $|\lambda| \geq 2\pi$, then the denominator of $k$ has zeros at $\pm \frac{2\pi i}{\lambda}, \pm \frac{4\pi i}{\lambda} \in [-2,2]$, which cannot be canceled out by the numerator, therefore $|\lambda|<2\pi$. So there are two cases: when $|\lambda|<\pi$, \ $k$ is smooth in $[-2,2] \backslash \{0\}$ and when $\pi \leq |\lambda|<2\pi$ the denominator of $k$ has zeros at $\pm \frac{2\pi i}{\lambda} \in [-2,2]$, which can be canceled out by the numerator \IFF $\alpha_1 = 0$ and $\cosh \left( \frac{2 \pi i \mu}{\lambda} \right) = 0$, i.e. $\mu = \lambda \frac{2m+1}{4}$ for some $m \in \ZZ$. This is summarized in Remark~\ref{REM lambda in iR}.

Let us substitute the expression for $k$ into the second ODE, multiply the result by $e^{\frac{\lambda}{2}z}$. After simplification we obtain

\begin{equation*}
\begin{split}
\left[ (\mu^2 a_2 + \tfrac{\lambda^2 a_2}{4} + \tfrac{b_2 \lambda}{2} +c_2) \alpha_1 + \mu \alpha_2 (a_2 \lambda + b_2) \right] \sinh(\mu z) +&
\\
+ \left[ (\mu^2 a_2 + \tfrac{\lambda^2 a_2}{4} + \tfrac{b_2 \lambda}{2} +c_2) \alpha_2 + \mu \alpha_1 (a_2 \lambda + b_2) \right] \cosh(\mu z)& = 0 .
\end{split}
\end{equation*}

\noindent By linear independence we conclude that the coefficients of $\sinh(\mu z), \cosh(\mu z)$ must be zero. Or equivalently their sum and difference must be zero, but these equations can be written as

\begin{equation} \label{coefficient system from 2nd ODE}
\begin{cases}
(\alpha_1 + \alpha_2) \left( (\mu + \frac{\lambda}{2}) [(\mu + \frac{\lambda}{2})a_2 + b_2] + c_2 \right) = 0,
\\
(\alpha_1 - \alpha_2) \left( (\mu - \frac{\lambda}{2}) [(\mu - \frac{\lambda}{2})a_2 - b_2] + c_2 \right) = 0 .
\end{cases}
\end{equation}

\noindent The boundary conditions $\mathcal{a}(\pm 1) = 0$ imply that $a_0 = -a_1 e^\lambda - a_2 e^{-\lambda}$ and

$$(a_1 - a_2) (e^\lambda - e^{-\lambda}) = 0 .$$

$a)$ Let $a_2 = a_1$, then $\mathcal{a}(y) = \cosh(\lambda y) - \cosh(\lambda)$ and from the boundary conditions $\mathcal{b}(\pm 1) = \mathcal{a}'(\pm 1)$ we find $\mathcal{b}(y) = \mathcal{a}'(y)$ as was discussed above. Now in this case \eqref{coefficient system from 2nd ODE} simplifies to

\begin{equation*}
\begin{cases}
(\alpha_1 + \alpha_2) \left( \tfrac{\lambda^2}{4} - \mu^2 - 2c_2 \right) = 0,
\\
(\alpha_1 - \alpha_2) \left( \tfrac{\lambda^2}{4} - \mu^2 - 2c_2 \right) = 0 .
\end{cases}
\end{equation*}

\noindent But because both $\alpha_1, \alpha_2$ are not zero at the same time, we get $c_2 = \tfrac{1}{2} (\tfrac{\lambda^2}{4} - \mu^2)$. From the definition of $\mu$ we see that also $c_1 = \tfrac{1}{2} (\tfrac{\lambda^2}{4} - \mu^2)$. And using the freedom of choosing $c_0$ we conclude that we may write $\mathcal{c}(y) = (\tfrac{\lambda^2}{4} - \mu^2) \mathcal{a}(y)$. This proves \eqref{k general} and \eqref{a,b,c general} of Theorem~\ref{THM commutation} in the case $\mu \neq 0$.

$b)$ Let $e^\lambda = e^{-\lambda}$, i.e. $\lambda = \pi i n$ for some $n \in \ZZ$. But the above discussion implies that $\alpha_1 = 0$, $\lambda = \pi i$ (or $-\pi i$, but this would lead to the same results) and $\mu = \lambda \frac{2m+1}{4}$ with $m \in \ZZ$. In this case \eqref{coefficient system from 2nd ODE} implies

\begin{equation*}
b_2 = - \lambda a_2, \qquad \qquad c_2=a_2 \left( \tfrac{\lambda^2}{4} - \mu^2 \right) .
\end{equation*}

\noindent Recalling that $b_1 = \lambda a_1$, the boundary conditions $\mathcal{b}(\pm 1) = \mathcal{a}'(\pm 1)$ imply $b_0 = 0$ and so far we have $\mathcal{a}(y) = a_1 (e^{\lambda y} - e^\lambda)  + a_2 (e^{-\lambda y} - e^{-\lambda})$ and $\mathcal{b}(y) = \mathcal{a}'(y)$. Finally, again from the definition of $\mu$ we have $c_1 = a_1 (\tfrac{\lambda^2}{4} - \mu^2)$. This and the above formula for $c_2$ (and the freedom of choosing $c_0$) allow one to write $\mathcal{c}(y) = (\tfrac{\lambda^2}{4} - \mu^2) \mathcal{a}(y)$. This proves item 1 of Theorem~\ref{THM commutation}. Of course to start with we assumed $a_1 \neq 0$ and we normalized $a_1 = \tfrac{1}{2}$, but when considering the case $a_2 \neq 0$ we can allow $a_1$ to vanish. This explains why there are no restrictions on $\alpha, \beta$ in item 1 of Theorem~\ref{THM commutation}.

\subsubsection{Case II}

\noindent Assume case II holds, substituting the expressions for $\mathcal{a}, \mathcal{b}, \mathcal{c}$ into \eqref{R3} we find that a linear combination of monomials $y^j$ is zero, hence the coefficient of each $y^j$ must vanish (one can check that $y^6$ cancels out). These relations can be conveniently written as

\begin{equation} \label{singular comm relations for k}
\begin{split}
\left[ \frac{\mathcal{a}^{(j)}(z)}{j!} - a_j \right] k'' +\left[ \frac{\mathcal{b}^{(j)}(z)}{j!} - b_j + 2(j+1)a_{j+1} \right] k' + &
\\
+\left[ \frac{\mathcal{c}^{(j)}(z)}{j!} - c_j + (j+1)b_{j+1} - (j+1)(j+2)a_{j+2} \right]k &= 0, \qquad j=0,...,5 ,
\end{split}
\end{equation}

\noindent with the convention that $a_7 = 0$. Let $\deg (\mathcal{a}) = m,\ \deg (\mathcal{b}) = n $ and $\deg(\mathcal{c})=s$.

\vspace{.1in}

\noindent \textbf{Case II.1.} Let $\mathcal{a} \equiv 0$, then $\mathcal{b}(\pm 1) = 0$ and hence $n \geq 2$. By scaling we let $b_n = 1$. We are going to show that $n$ cannot be strictly larger than 2 and so $n=2$. Note that $s \leq n$, otherwise the above relation with $j=s - 1$ reads $c_s z k = 0$, which implies $k=0$ since $c_s \neq 0$ by the definition of $s$. Now  \eqref{singular comm relations for k} with $j=n - 1$ reads

\begin{equation} \label{singular comm a=0 ode for k}
z k' + [1 + c_n z ] k = 0 ,
\end{equation}

\noindent whose solution is given by $k(z)=\alpha \frac{e^{-c_n z}}{z }$, where $\alpha \in \CC$. Invoking Remark~\ref{REM multiplier} we may w.l.o.g. assume $c_n = 0$. The relation with $j=n-2$ becomes

\begin{equation*}
\left[ \tfrac{n}{2} z^2 + b_{n-1} z \right] k'+\left[c_{n-1} z+b_{n-1} \right] k=0 .
\end{equation*}

\noindent Substituting $k(z) = \frac{1}{z}$ into this equation we obtain  $c_{n-1} = \frac{n}{2}$. Now, if $n>2$ we consider the relation for $j=n-3$, which reads

 $$\left[ \tfrac{n(n-1)}{6} z^3 + \tfrac{n-1}{2} b_{n-1} z^2 + b_{n-2} z \right] k' + \left[ \tfrac{n-1}{2} c_{n-1} z^2 + c_{n-2} z + b_{n-2} \right] k =0 .$$ 
 
\noindent Again substituting the expression for $k$ and using the expression for $c_{n-1}$ we obtain

\begin{equation} \label{contradiction}
\tfrac{n(n-1)}{12} z + c_{n-2} + \tfrac{n-1}{2} b_{n-1} = 0 ,
\end{equation}

\noindent which is a contradiction. Thus our conclusion is that $n=2$, in which case $\mathcal{b}(y) = y^2 - 1$, $c_2 = 0$, $c_1 = 1$ and hence $\mathcal{c}(y) = y$, and we obtain the operator in item 4 of Theorem~\ref{THM commutation} when $\mathcal{p}=0$.

\vspace{.1in}

\noindent \textbf{Case II.2.} Let $\mathcal{a} \neq 0$, then $m \geq 2$. By scaling we let $a_m = 1$. Let us first show that $n \leq m$. For the sake of contradiction assume $n>m$. If also $s>n$, then \eqref{singular comm relations for k} with $j=s-1$ reads $c_s z k =0$, which is a contradiction and therefore $s \leq n$. Now \eqref{singular comm relations for k} with $j=n-1$ reads 

\begin{equation*}
z k' + \left[ 1 + c_n z \right] k = 0 ,
\end{equation*}

\noindent with the convention that $c_n = 0$ if $s<n$. As in the previous case w.l.o.g. we assume $c_n = 0$ so that $k(z) = \frac{1}{z}$. Using these and looking at \eqref{singular comm relations for k} for $j=n-2$ and $j=n-3$ we obtain exactly the same contradiction \eqref{contradiction} as in the previous case (only with a different free constant). 

Thus $n \leq m$, and it is easy to see that also $s \leq m$. The relation \eqref{singular comm relations for k} for $j=m-1$ reads

\begin{equation} \label{sing k m-1}
zk''+ (2+b_m z) k' + (b_m + c_m z) k = 0 ,
\end{equation}

\noindent whose solution is, with $\alpha_1, \alpha_2 \in \CC$

\begin{equation} \label{sing k m-1 sol}
k(z) = \frac{e^{-\frac{b_m}{2}z}}{z} \cdot
\begin{cases}
\alpha_1 \sinh(\mu z) + \alpha_2 \cosh(\mu z), \qquad &\mu^2:=\tfrac{b_m^2}{4} - c_m \neq 0,
\\
\alpha_1 z + \alpha_2, \qquad &\mu=0.
\end{cases}
\end{equation}

\noindent Invoking Remark~\ref{REM multiplier} let us w.l.o.g. assume $b_m = 0$. Then from \eqref{sing k m-1}

\begin{equation} \label{k'' in terms of k' and k}
k''(z) = -\frac{2 k'(z) + c_m z k(z)}{z} .
\end{equation}

\noindent  The relation \eqref{singular comm relations for k} for $j=m-2$ (after dividing it by $m-1$) is

\begin{equation*}
\begin{split}
\left[ a_{m-1} z + \tfrac{m}{2}z^2 \right] k'' +  \left(b_{m-1} z +2 a_{m-1} \right) k'
+  \left[ c_{m-1} z + \tfrac{m}{2} c_m z^2 + b_{m-1} - m \right] k = 0 .
\end{split}
\end{equation*}

\noindent Substituting $k''$ from \eqref{k'' in terms of k' and k} into this equation we obtain

\begin{equation}\label{sing k m-2}
(b_{m-1}-m) z k' + \left[ (c_{m-1} - c_m a_{m-1}) z + b_{m-1}-m \right]k = 0 .
\end{equation}

\noindent Let us now consider the cases for different values of $m$:

\vspace{.1in}
\begin{itemize}
\item[a)]
 let $m=2$, then $\mathcal{a}(y)=y^2-1$ and $b_2 = 0$. Further, the boundary conditions imply $b_1=2$, $b_0 = 0$ and hence $\mathcal{b}(y)=2y$. Then \eqref{sing k m-2} reads $c_1 k=0$, hence $c_1 = 0$ and so $\mathcal{c}(y) = c_2 y^2$. $k(z)$ is determined from \eqref{sing k m-1 sol}, where $\mu^2 = -c_2$. This proves formulas \eqref{k general} and \eqref{a,b,c general} of Theorem~\ref{THM commutation} in the limiting case $\lambda = 0$.

\vspace{.1in}

\item[b)] let $m=3$, then $\mathcal{a}(y) = (y^2-1)(y-\sigma)$ and $b_3 = 0$. In particular we see that $a_2 = -\sigma$ and $a_1 = -1$. From the boundary conditions $b_0 = 2-b_2; \ b_1 = - 2 \sigma = 2a_2$. The relation \eqref{singular comm relations for k} with $j=m-3 = 0$ reads

\begin{equation*}
(z^3 + a_2 z^2 + a_1 z) k'' + (b_2 z^2 + b_1 z + 2a_1) k' + (c_3z^3 + c_2 z^2 + c_1 z) k = 0 .
\end{equation*}

\noindent Substituting $k''$ from \eqref{k'' in terms of k' and k} this simplifies to

\begin{equation*}
(b_2-2) z^2 k' + [(c_2-c_3 a_2)z^2 + (c_1+c_3) z] k = 0 ,
\end{equation*}

\noindent and combining this with \eqref{sing k m-2} we obtain

\begin{equation*}
z k' + \left( c_1+c_3-b_2+3 \right)k = 0 .
\end{equation*}

\noindent But because $k$ has a simple pole at $0$, we must have $c_1+c_3-b_2+3=1$, hence $c_3 = b_2-c_1-2$. Then $k(z) = 1 / z$, substituting this expression into \eqref{sing k m-1} we conclude $c_1 = b_2-2$ and hence $c_3 = 0$. Next we substitute it into \eqref{sing k m-2} to find $c_2 = 0$. Thus 

\begin{equation*}
\begin{cases}
\mathcal{a}(y)=(y^2-1)(y-\sigma) 
\\
\mathcal{b}(y)= b_2 y^2-2\sigma y + 2 - b_2
\\
\mathcal{c}(y)=(b_2 - 2)y 
\end{cases}
\end{equation*}

\noindent This proves item 4 of Theorem~\ref{THM commutation}, when $\beta = b_2 - 3$ and $\mathcal{p}$ is a first order polynomial.

\vspace{.1in}

\item[c)] let $m=4$, then $\mathcal{a}(y) = (y^2-1)(y-\sigma_1)(y-\sigma_2)$, \ $b_4 = 0$. Note that $a_3 = - \sigma_1 - \sigma_2; a_2 = \sigma_1 \sigma_2 - 1$. Further, from the boundary conditions on $\mathcal{b}$ we get $b_1 = 2(a_2+2) -b_3$ and $b_0 = -b_2+2a_3$.  From \eqref{sing k m-1 sol} $k$ has two possible forms, assume first $k(z) = \frac{1}{z} (\alpha_1 z + \alpha_2)$ in which case $c_4 = \frac{b_4^2}{4} = 0$. Since $k$ has a simple pole at the origin $\alpha_2 \neq 0$ and let us normalize $\alpha_2 = 1$. \eqref{sing k m-2} in this case reads $(b_3-4) z k' + (c_3z +b_3-4)k = 0$. Substituting the expression for $k$ into this equation we obtain

$$c_3\alpha_1 z + c_3 + (b_3-4) \alpha_1 = 0 ,$$

\noindent which implies that $c_3 = 0$ and

\begin{equation} \label{2 cases}
\alpha_1 (b_3-4)=0 .
\end{equation}

\noindent The relations \eqref{singular comm relations for k} with $j=m-3$ and $j=m-4$ read respectively as

\begin{equation} \label{sing k m-3}
(4z^3 + 3a_3z^2 + 2a_2 z) k'' + (3b_3z^2 + 2b_2 z + 4a_2) k' + 2 (c_2z - 3a_3 + b_2)k = 0 ,
\end{equation}

\begin{equation} \label{sing k m-4}
(z^4 + a_3z^3 + a_2 z^2 + a_1 z) k'' + (b_3z^3 + b_2 z^2 + b_1 z+2a_1) k' + (c_2z^2 + c_1z - 2a_2 + b_1)k = 0 .
\end{equation}

\noindent Now, \eqref{2 cases} implies that we should consider two cases:

$\bullet$ If $\alpha_1 = 0$, we substitute $k(z) = \frac{1}{z}$ into \eqref{sing k m-3} and find $c_2 =\frac{3}{2} b_3 -4$. Finally substitution into \eqref{sing k m-4} gives 

$$\frac{b_3-4}{2} z + 2a_3 - b_2 + c_1 = 0 ,$$

\noindent therefore $b_3 = 4$ and $c_1 = -2a_3 +b_2$. Putting everything together we obtain

\begin{equation*}
\begin{cases}
\mathcal{a}(y)=(y^2-1)(y-\sigma_1)(y-\sigma_2) 
\\
\mathcal{b}(y)= 4y^3 + b_2y^2 + 2(\sigma_1 \sigma_2 - 1)y -b_2-2(\sigma_1  + \sigma_2)
\\
\mathcal{c}(y)=2y^2 + (b_2 +2 \sigma_1 + 2 \sigma_2)y
\end{cases}
\end{equation*}

\noindent This proves item 4 of Theorem~\ref{THM commutation}, when $\beta = b_2 + 3(\sigma_1 + \sigma_2)$ and $\mathcal{p}$ is a second order polynomial.

$\bullet$ If $\alpha_1 \neq 0$, we get $b_3 = 4$, substituting $k(z) = \alpha_1 + \frac{1}{z}$ into \eqref{sing k m-3} we obtain

$$c_2 \alpha_1 z + (b_2-3a_3)\alpha_1 + c_2 - 2 = 0 ,$$

\noindent hence we deduce $c_2 = 0$ and $\alpha_1 (b_2 -3a_3) = 2$. Finally, we substitute $k$ into \eqref{sing k m-4} and obtain $c_1 = -3a_3  + b_2$ and $a_3 (b_2 -3a_3) = 0$, but because $b_2-3a_3 \neq 0$ we get $a_3=0$, i.e. $\sigma_1 = - \sigma_2$. Then also $\alpha_1 = \frac{2}{b_2}$, \ $\displaystyle k(z) = \frac{2}{b_2} + \frac{1}{z}$ and

\begin{equation*}
\begin{cases}
\mathcal{a}(y)=(y^2-1)(y^2 - \sigma_1^2) ,
\\
\mathcal{b}(y)= 4y^3 + b_2y^2 - 2(\sigma_1^2 + 1)y -b_2,
\\
\mathcal{c}(y)= b_2 y .
\end{cases}
\end{equation*}

\noindent This establishes item 3 of Theorem~\ref{THM commutation} with $\beta = b_2/2$.

Let now $k(z) = \frac{1}{z} (\alpha_1 \sinh(\mu z) + \alpha_2 \cosh(\mu z))$, with $\mu^2 = - c_4 \neq 0$. One can check by subsequent substitutions into \eqref{sing k m-2}, \eqref{sing k m-3} and \eqref{sing k m-4} that this case is impossible.

\vspace{.1in}

\item[d)] Subsequent substitutions show also that $m \geq 5$ is impossible.
\end{itemize}

\medskip

\noindent\textbf{Acknowledgments.}
This material is based upon work supported by the National Science Foundation under Grant No. DMS-1714287.

\section{Appendix}

Here we prove Lemma~\ref{LEMMA polynomial is const}, stating that if the functions $\mathcal{a}, \mathcal{b}, \mathcal{c}$ contain an exponential term, the polynomial multiplying it must be a constant. So let us concentrate on a typical exponential term in $\mathcal{a}, \mathcal{b}$ and $\mathcal{c}$, namely 

\begin{equation*}
\mathcal{a} \leftrightarrow e^{\lambda y} \sum_{j=0}^2 a_j y^j, \qquad
\mathcal{b} \leftrightarrow  e^{\lambda y} \sum_{j=0}^3 b_j y^j, \qquad
\mathcal{c} \leftrightarrow  e^{\lambda y} \sum_{j=0}^3 c_j y^j .
\end{equation*}

\noindent The goal is to show that all the coefficients vanish, except possibly for $a_0, b_0, c_0$. We are going to substitute these expressions into \eqref{R3}. The result becomes a linear combination of terms $y^j e^{\lambda y}$, hence the coefficient of each such terms must vanish. Below we analyze these coefficients, which are in fact ODEs for $k$. 

\vspace{.1in}  

$1.$ First let us show that the polynomials in $\mathcal{b}$ and $\mathcal{c}$ cannot be of higher order, than the polynomial in $\mathcal{a}$, i.e. $b_3 = c_3 = 0$. The equations corresponding to $y^3 e^{\lambda y}$ and $y^2 e^{\lambda y}$ are

\begin{equation} \label{singular k eqs y^3 and y^2}
\begin{split}
 b_3 (e^{\lambda z} - 1)  k' + \left[ b_3 \lambda + c_3 (e^{\lambda z} - 1) \right] k &= 0 ,
\\[1ex]
3(b_3k'+c_3 k) e^{\lambda z} z + (a_2k''+b_2k'+c_2k)e^{\lambda z}
+(2\lambda a_2 -b_2)k'
- a_2k'' -
\\
-[\lambda^2 a_2  -b_2 \lambda +c_2 -3b_3] k &=0 .
\end{split}
\end{equation}

\noindent Assume $b_3 \neq 0$, from the first equation $k(z) = e^{\left( \lambda - \frac{c_3}{b_3} \right) z} / (e^{\lambda z} -1) $. Invoking Remark~\ref{REM multiplier} w.l.o.g. we assume $c_3 = \lambda b_3$ in which case $k(z) = 1/(e^{\lambda z} -1)$. Substitute this into the second equation and multiplying the result by $(e^{\lambda z} -1)^2$ we obtain

\begin{equation*}
(a_2 \lambda^2 - b_2 \lambda +c_2) e^{2\lambda z} + (2b_2 \lambda - 2a_2 \lambda^2 + 3b_3 - 2c_2) e^{\lambda z} - 3b_3 \lambda z e^{\lambda z} + a_2 \lambda^2 - b_2 \lambda +c_2 - 3b_3 = 0 .
\end{equation*}

\noindent The functions $e^{2\lambda z}, e^{\lambda z}, z e^{\lambda z}$ and $1$ are linearly independent, hence the coefficient of each one must vanish. But we see that the coefficient of $z e^{\lambda z}$ is $3b_3 \lambda \neq 0$, which is a contradiction. Thus, $b_3 = 0$ and therefore also $c_3 = 0$.

\vspace{.1in}

2. We now show that $a_2 = 0$. The equations corresponding to $y^2 e^{\lambda y}$ and $y e^{\lambda y}$ are

\begin{equation} \label{singular k eqs y^2 and y}
\begin{split}
a_2 (e^{\lambda z} - 1) k'' + \left[2a_2\lambda + b_2 (e^{\lambda z} - 1) \right] k' + \left[ b_2 \lambda - a_2 \lambda^2 + c_2 (e^{\lambda z} - 1) \right] k &= 0 ,
\\[1ex]
2(a_2 k'' + b_2k'+c_2 k) e^{\lambda z} z + (a_1k''+b_1k'+c_1k)e^{\lambda z}
+(2\lambda a_1 +4a_2-b_1)k' -
\\
- a_1k'' -[\lambda^2 a_1 + (4a_2 -b_1) \lambda +c_1 -2b_2] k &=0 .
\end{split}
\end{equation}

\noindent Assume $a_2 \neq 0$, and by normalization let us assume $a_2=1$.  Solving the first equation we get (as was done in \eqref{singular k formula from exp term})

\begin{equation} \label{k appendix}
k(z) = \frac{e^{\left( \lambda -\frac{b_2}{2} \right) z} }{e^{\lambda z} - 1}\cdot
\begin{cases}
\alpha_1 z + \alpha_2, & \mu : = \sqrt{\tfrac{b_2^2}{4} - c_2} = 0
\\
\alpha_1 e^{\mu z} + \alpha_2 e^{-\mu z},  \qquad & \mu \neq 0
\end{cases}
\end{equation}

\noindent Using Remark~\ref{REM multiplier} let us w.l.o.g. assume $b_2 = 2 \lambda$.

 Let $k$ be given by the top formula of \eqref{k appendix}. Since $\alpha_2 \neq 0$ we may normalize it to be one, so $k(z) = \frac{\alpha_1 z + \alpha_2 }{e^{\lambda z} - 1}$ and $c_2 = \frac{b_2^2}{4}$. Substituting this expression into the second equation of \eqref{singular k eqs y^2 and y} and multiplying the result by $(e^{\lambda z} - 1)^3$ we obtain

\begin{equation*}
\begin{split}
(p_1z + p_2) e^{3 \lambda z} + \left[ 2 \lambda^2 \alpha_1 z^2 + \left( (2 - 3\alpha_1 a_1) \lambda^2 + (3b_1-8) \alpha_1 \lambda - 3c_1 \alpha_1 \right)z + p_3 \right] e^{2\lambda z} +&
\\
+(p_4 z^2 + p_5 z + p_6) e^{\lambda z} + p_7 z + p_8 &= 0 ,
\end{split}
\end{equation*}

\noindent where $p_j$ are constants depending on $a_1, b_1, c_1, \alpha_1, \lambda$ and their particular expressions are not important. From linear independence the coefficient of $z^2 e^{2\lambda z}$ must vanish, which implies $\alpha_1 = 0$, but then the coefficient of $z e^{2\lambda z}$ becomes $2 \lambda^2 \neq 0$, which leads to a contradiction.

Let $k$ be given by the bottom formula of \eqref{k appendix}, then $c_2 = \frac{b_2^2}{4} - \mu^2$ and $\mu \neq 0$. Substituting $k$ into the second equation of \eqref{singular k eqs y^2 and y} and multiplying the result by $e^{\mu z} (e^{\lambda z} - 1)^3$ we obtain

\begin{equation} \label{last}
\begin{split}
\alpha_1 (\mu + \tfrac{\lambda}{2}) z e^{(2\mu + \lambda)z} - \alpha_1 (\mu- \tfrac{\lambda}{2}) z e^{(2\mu + 2\lambda)z} + \alpha_2 (\mu + \tfrac{\lambda}{2}) z e^{2\lambda z} - \alpha_2  (\mu- \tfrac{\lambda}{2}) z  e^{\lambda z}  = 
\\
= q_0 + q_1 e^{\lambda z} + q_2 e^{2\lambda z} + q_3 e^{3\lambda z} + q_4 e^{2\mu z} + q_5 e^{(2\mu + \lambda)z} + q_6 e^{(2\mu + 2\lambda)z} + q_7 e^{(2\mu + 3\lambda)z} ,
\end{split}
\end{equation}

\noindent where $q_j$ are constants whose particular expressions are not important. Note that the functions on LHS of \eqref{last} are linearly independent from the ones on RHS. If all the exponents on LHS are distinct then the coefficients multiplying them must be zero. In particular $\alpha_1 (\mu + \tfrac{\lambda}{2}) = 0$ and $\alpha_1 (\mu- \tfrac{\lambda}{2}) = 0$, which imply $\alpha_1 = 0$. Analogously, $\alpha_2 = 0$ leading to $k=0$. Now assume the exponents on LHS of \eqref{last} are not distinct, then there are two possibilities:

\begin{enumerate}
\item[a)] $2\mu + \lambda = 2\lambda$, hence $\lambda = 2\mu$ and LHS of \eqref{last} becomes $2\mu (\alpha_1 + \alpha_2) z e^{4\mu z}$. Hence $\alpha_1 = -\alpha_2$, which then implies

$$k(z) = \frac{2\alpha_1 \sinh(\mu z)}{e^{\lambda z} -1} .$$

\noindent This contradicts to the assumption that $k$ has a simple pole at the origin.

\item[b)] $2\mu + 2\lambda = \lambda$, hence $\lambda = -2\mu$. Similarly, this case also leads to a contradiction.  

\end{enumerate}

\vspace{.1in}

3. To show $b_2 = c_2 = 0$, we can apply the same argument of 1, because once we established $a_2 = 0$ the equations in \eqref{singular k eqs y^2 and y} are exactly the ones in \eqref{singular k eqs y^3 and y^2}, the only difference is that in the latter we need to replace $b_3, c_3$ by $\frac{2}{3} b_2, \frac{2}{3} c_2$ and $a_2, b_2 ,c_2$ by $a_1, b_1, c_1$ respectively. After this, in an analogous way to 2, we show that $a_1 = 0$, again the equations corresponding to $y e^{\lambda y}$ and $e^{\lambda y}$ are exactly the ones in \eqref{singular k eqs y^2 and y} only $a_2, b_2, c_2$ need to be replaced by $\frac{a_1}{2}, \frac{b_1}{2}, \frac{c_1}{2}$ and $a_1, b_1, c_1$ by $a_0, b_0,c_0$ respectively. Finally, again as in 1, we establish that also $b_1 = c_1 = 0$.

\end{document}